\documentclass[final,12pt]{elsarticle}
\usepackage{srcltx}
\usepackage{eurosym}
\usepackage{mathtools}
\usepackage{amsmath}
\usepackage{amsfonts}
\usepackage{amssymb}
\usepackage{amsthm}
\usepackage{graphicx}
\usepackage{mathrsfs}
\usepackage{xcolor}
\usepackage{exscale}
\usepackage{latexsym}
\usepackage{showkeys}

\numberwithin{equation}{section}

\usepackage[colorlinks,plainpages=true,pdfpagelabels,hypertexnames=true,colorlinks=true,pdfstartview=FitV,linkcolor=blue,citecolor=red,urlcolor=black]{hyperref}
\PassOptionsToPackage{unicode}{hyperref}
\PassOptionsToPackage{naturalnames}{hyperref}
\usepackage{enumerate}
\usepackage[shortlabels]{enumitem}
\usepackage{bookmark}
\usepackage{wasysym}
\usepackage{esint}
\usepackage[ddmmyyyy]{datetime}
\usepackage[margin=2cm]{geometry}
\parskip = 0.00in
\headsep=0.0in
\makeatletter
\g@addto@macro\normalsize{%
	\setlength\abovedisplayskip{4pt}
	\setlength\belowdisplayskip{4pt}
	\setlength\abovedisplayshortskip{4pt}
	\setlength\belowdisplayshortskip{4pt}
}
\everymath{\displaystyle}
\usepackage[capitalize,nameinlink]{cleveref}
\crefname{section}{Section}{Sections}
\crefname{subsection}{Subsection}{Subsections}
\crefname{condition}{Condition}{Conditions}
\crefname{hypothesis}{Hypothesis}{Conditions}
\crefname{assumption}{Assumption}{Assumptions}
\crefname{lemma}{Lemma}{Lemmas}
\crefname{definition}{Definition}{Definitions}

\crefformat{equation}{\textup{#2(#1)#3}}
\crefrangeformat{equation}{\textup{#3(#1)#4--#5(#2)#6}}
\crefmultiformat{equation}{\textup{#2(#1)#3}}{ and \textup{#2(#1)#3}}
{, \textup{#2(#1)#3}}{, and \textup{#2(#1)#3}}
\crefrangemultiformat{equation}{\textup{#3(#1)#4--#5(#2)#6}}%
{ and \textup{#3(#1)#4--#5(#2)#6}}{, \textup{#3(#1)#4--#5(#2)#6}}%
{, and \textup{#3(#1)#4--#5(#2)#6}}

\Crefformat{equation}{#2Equation~\textup{(#1)}#3}
\Crefrangeformat{equation}{Equations~\textup{#3(#1)#4--#5(#2)#6}}
\Crefmultiformat{equation}{Equations~\textup{#2(#1)#3}}{ and \textup{#2(#1)#3}}
{, \textup{#2(#1)#3}}{, and \textup{#2(#1)#3}}
\Crefrangemultiformat{equation}{Equations~\textup{#3(#1)#4--#5(#2)#6}}%
{ and \textup{#3(#1)#4--#5(#2)#6}}{, \textup{#3(#1)#4--#5(#2)#6}}%
{, and \textup{#3(#1)#4--#5(#2)#6}}

\crefdefaultlabelformat{#2\textup{#1}#3}

\newtheorem{theorem} {Theorem}[section]
\newtheorem{proposition}[theorem]{Proposition}
\newtheorem{lemma}[theorem]{Lemma}

\newtheorem{counter example}[theorem]{Counter Example}
\newtheorem{remark}[theorem] {Remark}
\newtheorem{definition}[theorem] {Definition}





\def\CC{{\rm \kern.24em \vrule width.02em height1.4ex depth-.05ex \kern-.26emC}}

\def\TagOnRight

\def\AA{{it I} \hskip-3pt{\tt A}}

\def\QQ{\rlap {\raise 0.4ex \hbox{$\scriptscriptstyle |$}} {\hskip -0.1em Q}}


\makeatletter
\newcommand{\vo}{\vec{o}\@ifnextchar{^}{\,}{}}
\makeatother

\def\YYint#1#2#3{{\setbox0=\hbox{$#1{#2#3}{\iint}$}
		\vcenter{\hbox{$#2#3$}}\kern-.50\wd0}}


\def\XXint#1#2#3{{\setbox0=\hbox{$#1{#2#3}{\int}$}
		\vcenter{\hbox{$#2#3$}}\kern-.50\wd0}}

\makeatletter
\def\namedlabel#1#2{\begingroup
	\def\@currentlabel{#2}%
	\label{#1}\endgroup
}
\makeatother
\makeatletter
\newcommand{\rmh}[1]{\mathpalette{\raisem@th{#1}}}
\newcommand{\raisem@th}[3]{\hspace*{-1pt}\raisebox{#1}{$#2#3$}}
\makeatother


\newcounter{desccount}

\newcommand{\descref}[2]{\hyperref[#1]{\textcolor{black}{}\textcolor{blue}{ #2}\textcolor{black}{}}}

\newcommand{\dref}[2]{\hyperref[#1]{\textcolor{black}{(}\textcolor{blue}{\bf #2}\textcolor{black}{)}}}
\newcommand{\pa} {\partial}

\newcommand{\al} {\alpha}
\newcommand{\rr}{\rightarrow}

\newcommand{\B} {\beta}
\newcommand{\de} {\delta}

\newcommand{\p}  {\prime}
\newcommand{\e}  {\epsilon}

\newcommand{\la} {\lambda}

\newcommand{\f}{\infty}

\newcommand{\noi} {\noindent}









\DeclareMathOperator{\dv}{div}


\newcommand{\norm}[1]{\left|\hspace{-0.2mm}\left| #1 \right|\hspace{-0.2mm}\right|}
\newcommand{\abs}[1]{\left| #1\right|}






\newcounter{whitney}
\refstepcounter{whitney}

\newcounter{ineqcounter}
\refstepcounter{ineqcounter}
\makeatletter
\def\ps@pprintTitle{%
	\let\@oddhead\@empty
	\let\@evenhead\@empty
	\def\@oddfoot{}%
	\let\@evenfoot\@oddfoot}
\makeatother
\usepackage[doublespacing]{setspace}
\usepackage[titletoc,toc,page]{appendix}

\makeatletter
\newcommand{\refcheckize}[1]{%
	\expandafter\let\csname @@\string#1\endcsname#1%
	\expandafter\DeclareRobustCommand\csname relax\string#1\endcsname[1]{%
		\csname @@\string#1\endcsname{##1}\wrtusdrf{##1}}%
	\expandafter\let\expandafter#1\csname relax\string#1\endcsname
}
\makeatother

\refcheckize{\cref}
\refcheckize{\Cref}

\makeatletter
\makeatother

%
\makeatletter
\newcommand{\mainsectionstyle}{%
	\renewcommand{\@secnumfont}{\bfseries}
	\renewcommand\section{\@startsection{section}{2}%
		\z@{.5\linespacing\@plus.7\linespacing}{-.5em}%
		{\normalfont\bfseries}}%
}
\makeatother
\usepackage{pgf,tikz}
\usetikzlibrary{arrows}
\usetikzlibrary{decorations.pathreplacing}

\usepackage{xpatch}
\xpatchcmd{\MaketitleBox}{\hrule}{}{}{}
\xpatchcmd{\MaketitleBox}{\hrule}{}{}{}


\linespread{1}
\date{}
\newcommand{\R}{\mathbb{R}}
\newcommand{\T}{\mathbb{T}}

\newcommand{\G}{\mathbb{G}}
\newcommand{\Pp}{\mathbb{P}}

\newcommand{\F}{\mathfrak{F}}

\newcommand{\E}{\mathbb{E}}

\newcommand{\U}{\mathcal{U}}

\newcommand{\rd}{\mathrm{d}}
\newcommand{\dom}{\mathbb{T}^d}
\newcommand{\bvr}{\pmb{\varphi}}
\newcommand{\nbf}{\textbf{n}}
\newcommand{\mbf}{\textbf{m}}

\begin{document}
	%
	
	\begin{frontmatter}
		\title{Uniqueness and energy balance for isentropic Euler equation with stochastic forcing}
		
		\author[myaddress1]{Shyam Sundar Ghoshal}\ead{ghoshal@tifrbng.res.in}
		\author[myaddress1]{Animesh Jana}\ead{animesh@tifrbng.res.in}
		\author[myaddress1]{Barun Sarkar}\ead{barunsarkar.math@gmail.com}
		\address[myaddress1]	{Tata Institute of Fundamental Research,Centre For Applicable Mathematics,
			Sharada Nagar, Chikkabommsandra, Bangalore 560065, India.}
		
		\begin{abstract}
			In this article, we prove uniqueness and energy balance for isentropic Euler system driven by a cylindrical Wiener process. Pathwise uniqueness result is obtained for weak solutions having H\"older regularity $C^{\al},\al>1/2$ in space and satisfying one-sided Lipschitz bound on velocity. We prove Onsager's conjecture for isentropic Euler system with stochastic forcing, that is, energy balance equation for solutions enjoying H\"older regularity $C^{\al},\al>1/3$. Both the results have been obtained in a more general settings by considering regularity in Besov space.
		\end{abstract}
	
	\begin{keyword}
		  Stochastic isentropic Euler system\sep pathwise weak solution \sep uniqueness\sep Besov space \sep energy balance\sep Onsager's conjecture.
		\end{keyword}
	\end{frontmatter}

\tableofcontents	
\section{Introduction}
This article deals with the uniqueness as well as the energy-balance equation for the following compressible Euler system driven by cylindrical
Wiener process,
\begin{align}
\rd\varrho+\dv_x\mbf \rd t&=0,\label{SCE1}\\
\rd\mbf+\dv_x\left(\frac{\mbf\otimes\mbf}{\varrho}\right)\rd t+\nabla_xp(\varrho)\rd t&=\mathbb{G}(\varrho,\mbf)\rd W_t,\label{SCE2}\\
\varrho(0,\cdot)=\varrho_0,&\ \ \mbf(0,\cdot)=\mbf_0,\nonumber
\end{align}
where $\varrho=\varrho(t,x)$, $\mbf=\mbf(t,x)$ denote the density and momentum of a compressible fluid respectively, the barotropic pressure is $p=p(\varrho)=\kappa\varrho^\gamma$ for $\kappa>0,\gamma>1$. We work in $\T^d$ for space variable to avoid problems related to the presence of kinematic boundary, where $\T^d$ denotes a flat torus:
\[\T^d\equiv \left( [-1,1]\big|_{\{-1,1\}}\right)^d;\ \text{for}\ d=2,3.  \]
In the above system \eqref{SCE1}--\eqref{SCE2} the force term is a cylindrical Wiener process $\{W_t\}_{t\geq0}$ on a filtered probability space $(\Omega,\F, \{\F_t\}_{t\geq0},\Pp)$ 
and the diffusion coefficient $\mathbb{G}(\varrho,\mbf)$ takes values in $L_2(\U;L^2(\T^d))$, the space of Hilbert-Schmidt operators. In this article, we consider solutions having Besov regularity, which is defined as follows: 
 let $\al\in(0,1)$, $1\leq q<\f$ and $\mathcal{O}\subset\bar{\mathcal{O}}\subset\dom$ be open bounded set. Then, $B^{\al,\f}_q(\mathcal{O},\R_+\times\R^d)$ is the set of all $f=(f_1,f_2)\in L^q(\mathcal{O},\R_+)\times L^q(\mathcal{O},\R^d)$ functions such that $\abs{f}_{B^{\al,\f}_q(\mathcal{O},\R_+\times\R^d)}<\f$ where 
\begin{equation}
\abs{f}_{B^{\al,\f}_q(\mathcal{O},\R_+\times\R^d)}:=\sup\limits_{0\neq\xi\in\R^d,\xi+\mathcal{O}\subset\dom}\abs{\xi}^{-\al}\norm{f(\cdot+\xi)-f(\cdot)}_{L^q(\mathcal{O},\R_+\times\R^d)}.
\end{equation}

One of the main goal of this article is to obtain the pathwise uniqueness for weak solutions of \eqref{SCE1}--\eqref{SCE2} with $B^{\al,\f}_q,\al>1/2$ regularity in space satisfying the following one-sided Lipschitz condition on velocity, that is,
\begin{equation}\label{ineq:OSC}
\nabla_x\left(\frac{\mbf}{\varrho}\right):(\xi\otimes\xi)+\chi(t)\abs{\xi}^2\geq0\mbox{ in }\mathcal{D}^{\p}(\dom)\mbox{ for all }\xi\in\R^d,
\end{equation}
where $\varrho>0$, $\chi \in L^1(0,T)$ and $\mathcal{D}^{\p}$ denotes the space of distributions. In the deterministic setting, the condition \eqref{ineq:OSC} has been introduced in \cite{fgj} for isentropic Euler system in order to prove uniqueness with low regularity. We note that planar rarefaction waves for deterministic isentropic Euler system satisfies \eqref{ineq:OSC} and the mentioned Besov regularity as well. Novelty of the present work is: we do not need to assume extra regularity in time and the Besov regularity in space is required only for $t>0$ which allows the initial data to be discontinuous.  It is worth mentioning that strong solutions of \eqref{SCE1}--\eqref{SCE2} satisfy the Besov regularity as mentioned above and the condition \eqref{ineq:OSC}. Hence, pathwise weak-strong uniqueness for \eqref{SCE1}--\eqref{SCE2} follows from our uniqueness result. Another main objective of our present work is to show the energy-balance equation for weak solutions to the system \eqref{SCE1}--\eqref{SCE2}. This is famously known as Onsager's conjecture for incompressible Euler system.

In the literature, there are few results known for hyperbolic systems with stochastic perturbation. Existence of weak martingale solutions has been established  \cite{bv} for stochastic isentropic Euler equation with an adaptation of kinetic formulation. Ill-posedness of compressible Euler system with stochastic forcing has been proved \cite{bfh-ilpos} by an appropriate adaptation of convex integration to stochastic setting.  In multi dimension, existence of strong solution is proved \cite{kim} for hyperbolic system of conservation laws with stochastic forcing. For isentropic Euler system in multi dimension with stochastic forcing, existence of strong solution is established in \cite{bm}. 
 Note that strong solutions obtained in \cite{bm} are included in our setting, that is, in the class of Besov solutions with one-sided Lipschitz condition.  

For compressible Navier-Stokes equation with stochastic forcing, existences of martingale solutions and local strong solution are known due to \cite{Breit-Hof} and \cite{bfh} respectively. Pathwise uniqueness via relative entropy method has been established \cite{bfh-ren} for compressible Navier-Stokes equation driven by cylindrical Wiener process. We refer to \cite{BrFeiHofMa} for study of stationary solutions compressible viscous flow driven by stochastic perturbation.

In this article, we prove the uniqueness result by a suitable adaptation of the relative energy method to stochastic setting. Note that in a typical weak-strong uniqueness proof (in deterministic setting), one uses the strong solution as a test function in the weak-formulation. Here we only assume $1/2$--Besov regularity in space, this does not allow us to use it as a test function in the weak formulation. One idea could be mollifying the system and then pass to the limit.  
In stochastic setting we can not do a time mollification. We resolve this issue by considering the system in $(\varrho,\mbf)$ variable instead of $(\varrho,\textbf{u})$ variable (here $\textbf{u}$ stands for velocity, it can be written as $\textbf{u}=\mbf/\varrho$ when $\varrho>0$) and it is enough to mollify the system in space variable. By using commutator estimate and Gr\"onwall's inequality we prove the pathwise uniqueness result. In deterministic setting the proof of energy conservation is done by mollifying the system. We consider only space mollification of $(\varrho,\mbf)$ and pass to the limit using suitable commutator estimate to show energy balance equation for \eqref{SCE1}--\eqref{SCE2}.  
 
At the end, we mention some of the important results in deterministic setting. For weak-strong uniqueness via relative entropy method, we refer to \cite{daf,dprnUnq} and its application to fluid mechanics to prove uniqueness and stability results \cite{chfrli,FeiKre,emil}. We also mention ill-posedness results of \cite{chdllkr,ls-WS1,frsle} for existence of infinitely many solutions for isentropic Euler system. Recently, uniqueness results with Besov regularity $B^{\al,\f}_q, \,\alpha>1/2$ and one-sided Lipschitz condition have been proved \cite{ghjn,ssajkk} for complete Euler system and general hyperbolic conservation laws. Onsager's conjecture \cite{onsager} on conservation of energy has been proved for incompressible \cite{cwt,fjwi}, compressible \cite{adsw,eyink,fgSGw} Euler system with solutions having regularity $B^{\al,\f}_3,\,\alpha>1/3$.  Entropy conservation is true for general system of conservation laws \cite{bgSGtw,bgSGtw-xtn}. 

 We organize rest of the article in the following way: in section \ref{sec:setting-defn} we give details on cylindrical Wiener process $W_t$ and define weak solutions for stochastic isentropic Euler system \eqref{SCE1}--\eqref{SCE2}. We put some preliminary results in section \ref{sec:prelim}. Our main result is stated in section \ref{sec:main}. In section \ref{sec:rel} a relative energy inequality is proved for \eqref{SCE1}--\eqref{SCE2}. In sections \ref{sec:weak-strong} and \ref{sec:energy-balance} we prove theorems on uniqueness and  energy balance respectively.

\section{Settings and Definitions}\label{sec:setting-defn}
We assume that a filtered  probability space $(\Omega,\mathfrak{F},\{\mathfrak{F}_t\}_{0\leq t\leq T},\mathbb{P})$, satisfying the following assumptions:
\begin{enumerate}[label=(H-\arabic*)]
	\item\label{H1} $\F_0$ contains all null sets $A\in\F$, s.t. $\Pp(A)=0$,
	\item\label{H2} $\F_t=\bigcap_{s>t}\F_s$.
\end{enumerate}

The noise, we consider in this article is similar to that of \cite{bfh-ren,bfh,bfh-book,bm}. We give a short description here, detailed discussions can be found in the mentioned references.

We consider the process is driven by a cylindrical Wiener process $\{W_t\}_{t\geq0}$,  on a filtered probability space $(\Omega,\F, \{\F_t\}_{t\geq0},\Pp)$ satisfying \ref{H1} and \ref{H2}, over a separable Hilbert space $\U$. Here $W_t$ is defined by the formal expansion
\begin{equation}\label{defn-W}
W_t:=\sum_{j=1}^{\infty}e_jW^j_t,
\end{equation}
where $\{e_j\}_{j\geq1}$ is an orthonormal basis of $\U$ and $\{W^j_t\}_{j\geq1}$ is a family of mutually independent real-valued Brownian motions with respect to $(\Omega,\F, \{\F_t\}_{t\geq0},\Pp)$. Let $\varrho\in\ L^2(\T^{N})$, $\varrho\geq0$ and $\mbf\in L^2(\T^d)$, then we define our diffusion coefficient $\G(\varrho,\mbf):\U\to L^2(\T^d;\R^d)$
\[\mathbb{G}(\varrho,\mbf)e_j:=\G_j(\cdot,\varrho(\cdot),\mbf(\cdot)),\]
where coefficients $\G_j=\G_j(x,\varrho,\mbf):\T^d\times [0,\infty)\times \R^d\to\R^d$ are $C^{1}$- functions and there exists a non-negative real numbers sequence $\{g_j\}$ such that the following holds uniformly in $x\in\T^d$,  
\begin{align}\label{con-G}
\begin{split}
 \G_j(\cdot,0,0)=0, |\partial_{\varrho}\G_j| + |\nabla_{\mbf}\G_j| \leq g_j\mbox{ and }\sum_{j\geq1} {g_j}^2<\infty.
\end{split}
\end{align}

Note that, when $\G$ satisfies \eqref{con-G} and $\varrho,\mbf$ are $\{\F_t\}$- progressively measurable $L^2(\T^d)$- valued process such that
\begin{align*}
 \varrho \in L^2\left(\Omega\times[0,T]; L^2(\T^d)\right)\mbox{ and } \mbf \in L^2\left(\Omega\times[0,T]; L^2(\T^d;\R^d)\right),
\end{align*}
then the following is a well-defined $\{\F_t\}$- martingale in $L^2(\T^d;\R^d)$, 
\[\int_0^t \G(\varrho,\mbf)\rd W_s := \sum_{j\geq1}\int_0^t \G_j(\cdot,\varrho,\mbf)\rd W^j_s.\]

Note that, the infinite sum in \eqref{defn-W} does not converge in   probabilistic sense as a random variable in $\U$. But, we can construct an auxiliary space $\U_0\supset\U$, where the sum converges. Define
\[\U_0:= \left\{u = \sum_{j\geq1}u_je_j;\ \sum_{j\geq1}\frac{u_j^2}{j^2}<\infty\right\},\]
and the norm
\[\|u\|_{\U_0}^2 := \sum_{j\geq1}\frac{u_j^2}{j^2}.\]
The embedding $\U\hookrightarrow\U_0$ is Hilbert-Schmidt and trajectories of $\{W_t\}$ are $\Pp$- a.s. in $C([0,T];\U_0)$.

Throughout the article, we work with the separable Hilbert space $\U=L^2(\dom)$.
\begin{definition}\label{def:pws}[pathwise weak solution]
	Let $(\Omega,\mathfrak{F},\{\mathfrak{F}_t\}_{0\leq t\leq T},\mathbb{P})$ be a filtered  probability space satisfying \ref{H1}--\ref{H2} and $\{W_t\}_{t\geq0}$ be an $\{\F_t\}_{0\leq t\leq T}$ - cylindrical Wiener process. Let $\G$ be satisfying condition \eqref{con-G}. We say, a triplet $[\varrho,\mbf,\tau]$ is a pathwise weak solution of \eqref{SCE1}--\eqref{SCE2} if the following holds:
	\begin{enumerate}
		\item The quantity $\tau>0$ is an a.s. strictly positive $\{\mathfrak{F}_t\}$-stopping time.
		\item The density $\varrho$ is $\{\F_t\}$- progressively measurable. There exists $\underline{r}>0$ such that the following holds for $\Pp$-a.s. 
		\begin{equation}\label{space:rho}
		\varrho\geq\underline{r},\ \ \text{and}\ \ \varrho\in C\left([0,\tau);L^{2}(\dom)\right)\cap L^{\f}([0,\tau)\times\dom).
		\end{equation}
		
		\item For each $\pmb{\varphi}\in C_c^{\f}(\dom,\R^d)$, the map  $t\mapsto\int\limits_{\dom}\mbf\cdot\pmb{\varphi}\,dx\in C([0,\tau))$ and the stochastic process $t\mapsto \int\limits_{\dom}\mbf\cdot\bvr\,dx$, is $(\F_t)$-  progressively measurable, such that, $\Pp$-a.s.
		\begin{equation}\label{space:m}
		\mbf\in C\left([0,\tau);L^{2}(\dom;\R^d)\right)\cap L^{\f}([0,\tau)\times\dom;\R^d).
		\end{equation}
		
		\item For all $\phi\in C_c^{\f}(\dom)$, the map $t\mapsto\int\limits_{\dom}\varrho\phi\, \rd x$, is $(\F_t)$-  progressively measurable, such that the following equation holds for $0\leq t_1\leq t_2<\tau$ $\Pp$-a.s.,
		\begin{equation}\label{eq:com:wk1}
		\int\limits_{\dom}\varrho(t_2,\cdot)\phi\,\rd x-	\int\limits_{\dom}\varrho(t_1,\cdot)\phi\,\rd x=\int\limits_{t_1}^{t_2}\int\limits_{\dom} \mbf(s,\cdot)\cdot\nabla_x\phi\,\rd x\rd s.
		\end{equation}
		
		\item For all $\bvr\in C_c^{\f}(\dom,\R^d)$ and the following equation holds for $0\leq t_1\leq t_2<\tau$ $\Pp$-a.s.,
		\begin{align}
		&\int\limits_{\dom}\mbf(t_2,\cdot)\cdot\bvr \,\rd x-\int\limits_{\dom}\mbf(t_1,\cdot)\cdot\bvr \,\rd x\nonumber\\
		&=\int\limits_{t_1}^{t_2}\int\limits_{\dom}\left[\left(\frac{\mbf\otimes\mbf}{\varrho}\right):\nabla_x\bvr+p(\varrho)\dv_x\bvr\right]\,\rd x\rd s+\int\limits_{t_1}^{t_2}\left(\int\limits_{\dom}\mathbb{G}(\varrho,\mbf)\cdot\bvr\,\rd x\right)\rd W_s.\label{eq:com:wk2}
		\end{align}
		
	\end{enumerate}
\end{definition}

\begin{remark}
	Note that integrability assumptions on $\varrho$ and $\mbf$ as in \eqref{space:rho}, \eqref{space:m} make integral equations \eqref{eq:com:wk1} and \eqref{eq:com:wk2} well-defined.
\end{remark}
\begin{remark}
	Note that strong solution defined as in \cite{bm} are included in the set of pathwise weak solution defined as in Definition \ref{def:pws}. We also remark that pathwise weak solution with sufficient regularity becomes strong solution as in \cite{bm}.
\end{remark}

\noi\textbf{Admissible criteria:} We say a weak solution to \eqref{SCE1}--\eqref{SCE2} is admissible if it satisfies the following energy inequality $\Pp$--a.s.
\begin{align}
\int\limits_{\dom}\left[\frac{\abs{\mbf(t_2)}^2}{2\varrho(t_2)}+P(\varrho(t_2))\right]\,\rd x&\leq  \int\limits_{\dom}\left[\frac{\abs{\mbf(t_1)}^2}{2\varrho(t_1)}+P(\varrho(t_1))\right]\,\rd x\nonumber\\
&+\int\limits_{t_1}^{t_2} \int\limits_{\dom}\frac{1}{2}\sum\limits_{k\geq1}\frac{\abs{\G(\varrho,\mbf)(e_k)}^2}{\varrho}\,\rd x\rd s+\int\limits_{t_1}^{t_2}\int\limits_{\dom}\G(\varrho,\mbf)\cdot\frac{\mbf}{\varrho}\rd x\rd W_s,\label{ineq:energy_integral}
\end{align}
for $0\leq t_1\leq t_2< \tau$, where $P(\varrho)$ is defined as follows
\begin{equation}\label{def:P}
P(\varrho):=\varrho\int\limits_{1}^{\varrho}\frac{p(z)}{z^2}\,\rd z.
\end{equation}

\noi\textbf{Remark on $P(\varrho)$:} Throughout this paper we work with solutions such that the density component is uniformly away from vacuum. Note that in the system \eqref{SCE2} the pressure term is assumed to have the following form $p(\varrho)=\kappa \varrho^{\gamma}$ for $\gamma>1$. For away from vacuum region, $P$ is thrice continuously differentiable. From definition of $P$, it can be checked that
\begin{equation}\label{derivative:P}
\varrho P^{\p}(\varrho)=P(\varrho)+p(\varrho)\mbox{ and }\varrho P^{\p\p}(\varrho)=p^{\p}(\varrho).
\end{equation}
\begin{remark}
	By taking expectation on both side of \eqref{ineq:energy_integral} we get
	\begin{align}
	\E\left(\int\limits_{\dom}\left[\frac{\abs{\mbf(t_2)}^2}{2\varrho(t_2)}+P(\varrho(t_2))\right]\rd x\right)&\leq  \E\left(\int\limits_{\dom}\left[\frac{\abs{\mbf(t_1)}^2}{2\varrho(t_1)}+P(\varrho(t_1))\right]\rd x\right)\nonumber\\
	&+\E\left(\int\limits_{t_1}^{t_2} \int\limits_{\dom}\frac{1}{2}\sum\limits_{k\geq1}\frac{\abs{\G(\varrho,\mbf)(e_k)}^2}{\varrho}\,\rd x\rd s\right),
	%
	\end{align}
	for $0\leq t_1\leq t_2< \tau$. Suppose the initial energy (at $t_1=0$) and the term generated from diffusion coefficient $\G(\varrho,\mbf)$ are finite, that is,
	\begin{equation}
	\E\left(\int\limits_{\dom}\left[\frac{\abs{\mbf_0}^2}{2\varrho_0}+P(\varrho_0)\right]\,\rd x\right)<\f\mbox{ and }	\E\left(\int\limits_{0}^{t} \int\limits_{\dom}\frac{1}{2}\sum\limits_{k\geq1}\frac{\abs{\G(\varrho,\mbf)(e_k)}^2}{\varrho}\,\rd x\rd s\right)<\f,
	\end{equation}
	for $0\leq t< \tau$. Then, 
	\begin{equation}
	\E\left(\int\limits_{\dom}\left[\frac{\abs{\mbf(t)}^2}{2\varrho(t)}+P(\varrho(t))\right]\rd x\right)<\f\mbox{	for }0\leq t< \tau.
	\end{equation}

\end{remark}

\section{Main results}\label{sec:main}
	Now we are ready to state main results of the article.
	\begin{theorem}[Pathwise uniqueness]\label{theorem_uniquess_I}
		Let $[\varrho,\mbf,\tau], [r,\nbf,\tau]$ be two pathwise weak solutions of \eqref{SCE1}--\eqref{SCE2} as in Definition \ref{def:pws}. Suppose $[\varrho,\mbf,\tau]$ additionally satisfies the energy inequality \eqref{ineq:energy_integral}. Let $[r,\nbf, \tau]$ be satisfying the following conditions:
		\begin{enumerate}
			\item $\alpha$-regularity: $(r,\nbf)\in L^2(\de,T;B^{\al,\f}_q(\dom,\R_+\times\R^d))$ for all $\de>0$ with $\al>1/2$ and $q\geq2$ holds $\mathbb{P}$--a.s.
			\item One-sided Lipschitz condition for velocity: there exists a function $\chi\in L^1(0,T)$ for $T>0$ such that the following holds for $\Pp$--a.s.
			\begin{equation}\label{ineq:one_sided}
			\int\limits_{\dom}-\frac{\nbf\cdot\xi}{r}\xi\cdot\nabla_x\varphi+\chi(t)\abs{\xi}^2\varphi\,\rd x\geq0,
			\end{equation} 
			for $\xi\in\R^d$, $0\leq t< \tau\wedge T$ and $0\leq\varphi\in C^\f_c(\dom)$.
			
		\end{enumerate}
	Then the following holds $\Pp$--a.s.,
	\begin{equation}
	\varrho=r,\,\mbf=\nbf\mbox{ for a.e. }(t,x)\in [0,\tau\wedge T]\times\dom.
	\end{equation}
	\end{theorem}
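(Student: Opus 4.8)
The plan is to carry out a stochastic relative energy (weak--strong) argument. Introduce the relative energy
\[
\mathcal{E}(t):=\int\limits_{\dom}\left[\frac{1}{2}\varrho\abs{\frac{\mbf}{\varrho}-\frac{\nbf}{r}}^{2}+P(\varrho)-P^{\p}(r)(\varrho-r)-P(r)\right]\rd x ,
\]
which is nonnegative because $\varrho\geq\underline{r}>0$ and, away from vacuum, $P$ is strictly convex; moreover $t\mapsto\mathcal{E}(t)$ is $\Pp$-a.s.\ continuous on $[0,\tau\wedge T)$ by \eqref{space:rho}--\eqref{space:m}, and $\mathcal{E}(0)=0$ since both triplets solve \eqref{SCE1}--\eqref{SCE2} and hence share the initial datum $(\varrho_{0},\mbf_{0})$. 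It suffices to prove $\E[\mathcal{E}(t)]=0$ for every $t<\tau\wedge T$: strict convexity of $P$ then forces $\varrho=r$ a.e.\ and, since $\varrho\geq\underline{r}>0$, also $\mbf=\nbf$ a.e. To keep every coefficient bounded and every stochastic integral a genuine martingale, I would first localize by a stopping time $\tau_{R}$ equal to $\tau\wedge T$ truncated at the first time the $L^{\f}$-norms of $\varrho,\mbf,r,\nbf$ (and the relevant spatial Besov seminorm) exceed $R$, run the argument on $[0,\tau_{R})$, and let $R\to\f$ at the end.

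The starting point is the relative energy inequality for \eqref{SCE1}--\eqref{SCE2} established in Section~\ref{sec:rel}. Since the second solution has only $B^{\al,\f}_{q}$ spatial regularity it cannot be inserted directly as a test function in \eqref{eq:com:wk1}--\eqref{eq:com:wk2}, so, following the strategy announced in the introduction, I would mollify \emph{in the space variable only} — setting $r^{\eps}=r*\zeta_{\eps}$, $\nbf^{\eps}=\nbf*\zeta_{\eps}$, $\mathbf{U}^{\eps}=\nbf^{\eps}/r^{\eps}$ (time mollification being incompatible with It\^o calculus) — apply It\^o's formula to the regularized functional $\mathcal{E}^{\eps}$ built from $(r^{\eps},\nbf^{\eps})$, and collect the remainder. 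It decomposes into: \textbf{(a)} a convective term of the form $\pm\int_{\dom}\varrho\,\big(\tfrac{\mbf}{\varrho}-\mathbf{U}^{\eps}\big)\otimes\big(\tfrac{\mbf}{\varrho}-\mathbf{U}^{\eps}\big):\nabla_{x}\mathbf{U}^{\eps}\,\rd x$; \textbf{(b)} pressure terms assembled from $P(\varrho\,|\,r^{\eps})$, $\dv_{x}\mathbf{U}^{\eps}$ and the two continuity equations; \textbf{(c)} commutator terms quantifying the failure of $(r^{\eps},\nbf^{\eps})$ to solve \eqref{SCE1}--\eqref{SCE2} exactly; and \textbf{(d)} It\^o correction terms built from $\G(\varrho,\mbf)$ and $\G(r,\nbf)$ together with the stochastic integral. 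For \textbf{(a)} I would use \eqref{ineq:one_sided}: by convolution against nonnegative test functions it transfers to the mollification as the pointwise bound $\mathrm{Sym}\,\nabla_{x}\mathbf{U}^{\eps}\geq-\chi(t)\,\mathrm{Id}$, and since $\varrho\big(\tfrac{\mbf}{\varrho}-\mathbf{U}^{\eps}\big)\otimes\big(\tfrac{\mbf}{\varrho}-\mathbf{U}^{\eps}\big)$ is symmetric positive semidefinite this yields $\textrm{(a)}\leq C\chi(t)\,\mathcal{E}^{\eps}(t)$ — the point where merely $L^{1}$-in-time control of $\nabla_{x}(\nbf/r)$ is enough. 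For \textbf{(c)} I would invoke the $B^{\al,\f}_{q}$ commutator estimates ($q\geq2$): combined with $\al>1/2$ (so $2\al-1>0$) and the $L^{\f}$ bounds on $(r,\nbf)$, the $O(\eps^{2\al})$ commutators coming from the convective and pressure nonlinearities and the $O(\eps^{2\al-1})$ ones coming from their gradients all tend to $0$ in $L^{1}(\Omega\times(0,\tau_{R}))$ as $\eps\to0$. For \textbf{(d)} I would use \eqref{con-G}: $\G_{j}(\cdot,0,0)=0$ and the Lipschitz bounds give $\abs{\G_{j}(\varrho,\mbf)-\G_{j}(r,\nbf)}\lesssim g_{j}(\abs{\varrho-r}+\abs{\mbf-\nbf})$, and since $\abs{\varrho-r}^{2}+\abs{\mbf-\nbf}^{2}\lesssim\mathcal{E}$ (from $\underline{r}\leq\varrho$, the $L^{\f}$ bounds, and $\mbf-\nbf=\varrho(\tfrac{\mbf}{\varrho}-\tfrac{\nbf}{r})+(\varrho-r)\tfrac{\nbf}{r}$), the It\^o corrections are $\lesssim\big(\sum_{j}g_{j}^{2}\big)\mathcal{E}$, while the stochastic integral is a true martingale on $[0,\tau_{R}]$ and vanishes under expectation.

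Taking expectations, passing to the limit $\eps\to0$, and starting the inequality from any $t_{1}>0$ (where the Besov regularity is available) yields
\[
\E[\mathcal{E}(t)]\leq\E[\mathcal{E}(t_{1})]+\int\limits_{t_{1}}^{t}\Big(C\chi(s)+C^{\p}\textstyle\sum_{j}g_{j}^{2}\Big)\E[\mathcal{E}(s)]\,\rd s,\qquad t_{1}\leq t<\tau_{R}.
\]
Since $\chi\in L^{1}(0,T)$, Grönwall's lemma gives $\E[\mathcal{E}(t)]\leq\E[\mathcal{E}(t_{1})]\exp\!\big(\int_{t_{1}}^{t}(C\chi+C^{\p}\sum_{j}g_{j}^{2})\big)$; letting $t_{1}\downarrow0$ and using $\Pp$-a.s.\ continuity of $\varrho,\mbf,r,\nbf$ in $C([0,\tau);L^{2})$ with the common initial datum (and dominated convergence on $[0,\tau_{R}]$) gives $\E[\mathcal{E}(t_{1})]\to\mathcal{E}(0)=0$, hence $\E[\mathcal{E}(t)]=0$ on $[0,\tau_{R})$; finally $R\to\f$ gives $\mathcal{E}(t)=0$ for all $t<\tau\wedge T$, $\Pp$-a.s., which is the claim. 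The genuine obstacle I expect is step \textbf{(c)}: the relative energy computation must be organized so that every nonlinearity requiring a commutator estimate is \emph{quadratic} in $(\varrho,\mbf)$ — precisely why one works in the $(\varrho,\mbf)$ variables and mollifies in space only — so that the threshold $\al>1/2$ suffices; a secondary delicate point is extracting the bound on term \textbf{(a)} from the purely distributional inequality \eqref{ineq:one_sided} paired against the rough weight $\varrho$.
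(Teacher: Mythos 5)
Your proposal follows essentially the same route as the paper: the same relative energy functional, space-only mollification of the rougher solution in the $(\varrho,\mbf)$ variables, the one-sided Lipschitz condition \eqref{ineq:one_sided} transferred to the mollification to control both the convective term and (via $\xi=e_i$) the $\dv_x(\nbf^\e/r^\e)$ pressure term, the Besov commutator estimate of Lemma \ref{lemma:commutator1} giving an $O(\e^{2\al-1})$ remainder, the bound on the noise difference via \eqref{con-G} and coercivity of $\mathcal{E}$, and finally expectation, Gr\"onwall, and $t_1\downarrow0$. Your additional stopping-time localization $\tau_R$ is a harmless (indeed prudent) technical refinement that the paper omits, but it does not change the argument.
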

	
	\begin{theorem}[Energy balance]\label{theorem:energy_balance}
		Let $[\varrho,\mbf,\tau]$ be a pathwise weak solution of \eqref{SCE1}--\eqref{SCE2} according to Definition \ref{def:pws}. Assume that $(\varrho,\mbf)\in L^3([0,\tau);B^{\al,\f}_{3}(\dom,\R_+\times\R^d))$ for $\mathbb{P}$--a.s. and $\al>1/3$. Then, the following holds $\mathbb{P}$--a.s.,
		\begin{align}
		&\int\limits_{0}^{\tau}\pa_t\eta \int\limits_{\dom}\left(\frac{\abs{\mbf}^2}{2\varrho}+P(\varrho)\right)\varphi\,\rd x\rd s
		+\int\limits_{0}^{\tau}\eta \int\limits_{\dom}\left({\mbf}\left(\frac{\abs{\mbf}^2}{2\varrho^2}+P^{\p}(\varrho)\right)\right)\cdot\nabla_x\varphi\,\rd x\rd s\nonumber\\
		&+\int\limits_{0}^{\tau}\eta \int\limits_{\dom}\frac{1}{2}\sum\limits_{k\geq1}\frac{\abs{\G(\varrho,\mbf)(e_k)}^2}{\varrho}\varphi\,\rd x\rd s+\int\limits_{0}^{\tau}\eta \int\limits_{\dom}\G(\varrho,\mbf)\cdot\frac{\mbf}{\varrho}\varphi\rd x\rd W_s=0\label{integral_energy},
		\end{align}
		for all $\eta\in C_c^{\f}(\R_+)$ and $\varphi\in C_c^{\f}(\dom)$.

	\end{theorem}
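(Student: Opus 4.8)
The plan is to adapt the Constantin--E--Titi argument to the stochastic setting, regularizing only in the space variable. First I would fix a standard even mollifier $\zeta_\ve$ on $\dom$, test the weak formulations \eqref{eq:com:wk1}--\eqref{eq:com:wk2} with $\zeta_\ve(\cdot-x)$ and with $\zeta_\ve(\cdot-x)\mathbf{e}_i$, and deduce that, $\Pp$-a.s.\ and for every $x\in\dom$, the mollified fields $\varrho^\ve:=\varrho*\zeta_\ve$ and $\mbf^\ve:=\mbf*\zeta_\ve$ satisfy the pointwise relation $\partial_t\varrho^\ve=-\dv_x\mbf^\ve$ (so $\varrho^\ve$ is $C^1$ in $t$), together with the It\^o identity
\[
\rd\mbf^\ve=\Big[-\dv_x\big(\tfrac{\mbf\otimes\mbf}{\varrho}\big)^\ve-\nabla_x\big(p(\varrho)\big)^\ve\Big]\rd t+\G(\varrho,\mbf)^\ve\,\rd W_t,
\]
where every spatial derivative falls on the smooth mollified function. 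By \eqref{space:rho}--\eqref{space:m}, $\varrho^\ve\geq\underline r$ and $\|\varrho^\ve\|_\infty\leq\|\varrho\|_\infty$, $\|\mbf^\ve\|_\infty\leq\|\mbf\|_\infty$ uniformly in $\ve$, so throughout the argument every nonlinearity is evaluated on a fixed compact set on which $(\rho,m)\mapsto\tfrac{|m|^2}{2\rho},\ \tfrac{m\otimes m}{\rho},\ P,\ P'$ are smooth (recall $P\in C^3$ away from vacuum).

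Next, for fixed $x$ I would apply It\^o's formula to $t\mapsto\Psi(\varrho^\ve(t,x),\mbf^\ve(t,x))$ with $\Psi(\rho,m):=\tfrac{|m|^2}{2\rho}+P(\rho)$; since $\varrho^\ve$ has finite variation while $\mbf^\ve$ is an It\^o process with diffusion $(\G_k(\varrho,\mbf)^\ve)_{k\geq1}$ and the cross variation vanishes, using $\partial_t\varrho^\ve=-\dv_x\mbf^\ve$ one obtains
\begin{align*}
\rd\Psi(\varrho^\ve,\mbf^\ve)&=\Big[\tfrac{|\mbf^\ve|^2}{2(\varrho^\ve)^2}\dv_x\mbf^\ve-\tfrac{\mbf^\ve}{\varrho^\ve}\cdot\dv_x\big(\tfrac{\mbf\otimes\mbf}{\varrho}\big)^\ve-\tfrac{\mbf^\ve}{\varrho^\ve}\cdot\nabla_x\big(p(\varrho)\big)^\ve-P'(\varrho^\ve)\dv_x\mbf^\ve+\tfrac{1}{2\varrho^\ve}\sum_{k\geq1}|\G_k(\varrho,\mbf)^\ve|^2\Big]\rd t\\
&\quad+\tfrac{\mbf^\ve}{\varrho^\ve}\cdot\G(\varrho,\mbf)^\ve\,\rd W_t.
\end{align*}
Multiplying by $\varphi(x)$, integrating over $\dom$ (stochastic Fubini for the martingale part), then applying the product rule in time to $\eta(t)\int_\dom\Psi(\varrho^\ve,\mbf^\ve)\varphi\,\rd x$ and integrating over $(0,\tau)$ — the boundary contributions being absorbed by the compact support of $\eta$ and a localization of $\tau$ — produces the regularized energy identity: the space--time integral of the bracket above against $\eta(t)\varphi(x)$, plus $\int_0^\tau\partial_t\eta\int_\dom\Psi(\varrho^\ve,\mbf^\ve)\varphi\,\rd x\,\rd s$ from the time integration by parts, plus the stochastic integral $\int_0^\tau\eta\int_\dom\tfrac{\mbf^\ve}{\varrho^\ve}\cdot\G(\varrho,\mbf)^\ve\varphi\,\rd x\,\rd W_s$, sum to zero.

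The core step is the limit $\ve\to0$ in the four transport/pressure terms of that bracket; denote their sum by $[\mathrm{flux}]^\ve$. Since $\varrho^\ve,\mbf^\ve$ are smooth, there are the pointwise identities $-\tfrac{\mbf^\ve}{\varrho^\ve}\cdot\dv_x\big(\tfrac{\mbf^\ve\otimes\mbf^\ve}{\varrho^\ve}\big)+\tfrac{|\mbf^\ve|^2}{2(\varrho^\ve)^2}\dv_x\mbf^\ve=-\dv_x\big(\tfrac{|\mbf^\ve|^2}{2(\varrho^\ve)^2}\mbf^\ve\big)$ and, using $\varrho P''(\varrho)=p'(\varrho)$ from \eqref{derivative:P}, $-\tfrac{\mbf^\ve}{\varrho^\ve}\cdot\nabla_x p(\varrho^\ve)-P'(\varrho^\ve)\dv_x\mbf^\ve=-\dv_x\big(P'(\varrho^\ve)\mbf^\ve\big)$. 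Writing $\dv_x\big(\tfrac{\mbf\otimes\mbf}{\varrho}\big)^\ve=\dv_x\big(\tfrac{\mbf^\ve\otimes\mbf^\ve}{\varrho^\ve}\big)+\dv_x\mathbf E^\ve$ and $\nabla_x\big(p(\varrho)\big)^\ve=\nabla_x p(\varrho^\ve)+\nabla_x\mathbf e^\ve$ with the commutators $\mathbf E^\ve:=\big(\tfrac{\mbf\otimes\mbf}{\varrho}\big)^\ve-\tfrac{\mbf^\ve\otimes\mbf^\ve}{\varrho^\ve}$ and $\mathbf e^\ve:=\big(p(\varrho)\big)^\ve-p(\varrho^\ve)$, the quantity $[\mathrm{flux}]^\ve$ splits into a main part and a commutator part. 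After integration by parts the main part of $\int_\dom[\mathrm{flux}]^\ve\varphi\,\rd x$ equals $\int_\dom\big(\tfrac{|\mbf^\ve|^2}{2(\varrho^\ve)^2}+P'(\varrho^\ve)\big)\mbf^\ve\cdot\nabla_x\varphi\,\rd x$, which by dominated convergence (using $\varrho^\ve\to\varrho$, $\mbf^\ve\to\mbf$ a.e.\ and in $L^p$, $p<\f$, together with the uniform bounds) tends to $\int_\dom\mbf\big(\tfrac{|\mbf|^2}{2\varrho^2}+P'(\varrho)\big)\cdot\nabla_x\varphi\,\rd x$; likewise $\int_\dom\Psi(\varrho^\ve,\mbf^\ve)\varphi\,\rd x\to\int_\dom\big(\tfrac{|\mbf|^2}{2\varrho}+P(\varrho)\big)\varphi\,\rd x$ and $\int_\dom\tfrac{1}{2\varrho^\ve}\sum_k|\G_k(\varrho,\mbf)^\ve|^2\varphi\,\rd x\to\int_\dom\tfrac12\sum_k\tfrac{|\G_k(\varrho,\mbf)|^2}{\varrho}\varphi\,\rd x$, and these convergences survive integration in $t$ against $\partial_t\eta$, resp.\ $\eta$. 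For the commutator part, moving one spatial derivative onto $\tfrac{\mbf^\ve}{\varrho^\ve}\varphi$ leaves, as the critical contributions, quantities bounded in $L^1(\dom)$ by $\|\mathbf E^\ve\|_{L^{3/2}}\|\nabla_x(\tfrac{\mbf^\ve}{\varrho^\ve})\|_{L^3}$ and $\|\mathbf e^\ve\|_{L^{3/2}}\|\dv_x(\tfrac{\mbf^\ve}{\varrho^\ve})\|_{L^3}$, the remaining pieces carrying a full $\nabla_x\varphi$ and being $O(\|\mathbf E^\ve\|_{L^1}+\|\mathbf e^\ve\|_{L^1})$. By the Constantin--E--Titi-type commutator estimates of Section~\ref{sec:prelim}, $\|\mathbf E^\ve\|_{L^{3/2}}+\|\mathbf e^\ve\|_{L^{3/2}}\lesssim\ve^{2\al}\|(\varrho,\mbf)\|_{B^{\al,\f}_3}^2$ and $\|\nabla_x(\tfrac{\mbf^\ve}{\varrho^\ve})\|_{L^3}\lesssim\ve^{\al-1}\|(\varrho,\mbf)\|_{B^{\al,\f}_3}$ (constants depending only on $\underline r$ and the $L^\infty$ bounds), so the critical contributions are $O\big(\ve^{3\al-1}\|(\varrho,\mbf)\|_{B^{\al,\f}_3}^3\big)$; since $\al>1/3$ and $(\varrho,\mbf)\in L^3([0,\tau);B^{\al,\f}_3)$ $\Pp$-a.s., integrating in $t$ against $\eta$ shows the whole commutator part vanishes $\Pp$-a.s.\ as $\ve\to0$.

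Finally, for the stochastic term, let $h^\ve_k:=\int_\dom\tfrac{\mbf^\ve}{\varrho^\ve}\cdot\G_k(\varrho,\mbf)^\ve\varphi\,\rd x$ be the $k$-th component of the $L_2(\U;\R)$-valued integrand and $h_k$ its limit. From $|\G_k(\varrho,\mbf)|\leq g_k(|\varrho|+|\mbf|)$ and $\sum_k g_k^2<\f$ in \eqref{con-G} one gets $|h^\ve_k|\leq Cg_k$ uniformly in $\ve$, while $h^\ve_k\to h_k$ for each $k$; hence $\int_0^\tau|\eta|^2\sum_k|h^\ve_k-h_k|^2\,\rd s\to0$ $\Pp$-a.s.\ by dominated convergence, and therefore $\int_0^\tau\eta\int_\dom\tfrac{\mbf^\ve}{\varrho^\ve}\cdot\G(\varrho,\mbf)^\ve\varphi\,\rd x\,\rd W_s\to\int_0^\tau\eta\int_\dom\tfrac{\mbf}{\varrho}\cdot\G(\varrho,\mbf)\varphi\,\rd x\,\rd W_s$ in probability. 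Passing to the limit along a suitable subsequence in the regularized energy identity yields \eqref{integral_energy} $\Pp$-a.s. The main obstacle is the flux/commutator step: one must exploit the algebraic structure in the $(\varrho,\mbf)$ variables to isolate exactly the commutators $\mathbf E^\ve,\mathbf e^\ve$, and — crucially — integrate by parts so that one spatial derivative always lands on the mollified velocity $\tfrac{\mbf^\ve}{\varrho^\ve}$; this gains the extra factor $\ve^{\al}$ that turns the otherwise non-vanishing $\ve^{2\al-1}$ into the borderline $\ve^{3\al-1}$, which is precisely where the hypothesis $\al>1/3$ enters. The remaining stochastic ingredients (It\^o's formula for the composite energy, stochastic Fubini, localization in $\tau$, convergence of the martingale term) are by now routine.
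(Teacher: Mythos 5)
Your proposal is correct and follows essentially the same route as the paper: space-only mollification, It\^o's formula for the mollified energy $\frac{|\mbf^\ve|^2}{2\varrho^\ve}+P(\varrho^\ve)$, isolation of the commutators in the $(\varrho,\mbf)$ variables, the Constantin--E--Titi-type estimate yielding the $\ve^{3\al-1}$ rate (Lemma~\ref{lemma:commutator}), and the B\"urkholder--Davis--Gundy/dominated-convergence passage to the limit in the stochastic integral (Lemma~\ref{lemma:limit_pass_diff}). The only cosmetic difference is that you apply It\^o directly to the composite energy and make the integration by parts onto $\tfrac{\mbf^\ve}{\varrho^\ve}\varphi$ explicit, whereas the paper routes the computation through the formulas of Proposition~\ref{prop:rel_energy} and invokes the commutator lemma in the form already adapted to $\nabla_x h_\ve$.
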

%
%
\section{Preliminaries}\label{sec:prelim}
We first state a technical lemma which is used in proofs of main results. 

\begin{lemma}\label{lemma:limit_pass_diff}
		Let $q\geq2$. Let $Y_t$ be a stochastic process on $(\Omega,\mathfrak{F},(\mathfrak{F}_t)_{t\geq0},\Pp)$ such that
	\begin{equation}\label{integrability:Y}
	Y_t\in L^{\f}(0,T;L^{q}(\dom)),\mbox{ and }\E\left[\sup\limits_{t\in[0,T]}\norm{Y_t}^{2\la}_{L^{q}}\right]<\f\mbox{ for }1\leq\lambda<\f.
	\end{equation}
	Let $D_s$ be progressively measurable with $D_s\in L^{2}(\Omega;L^2(0,T;L_2(\U;L^{\frac{q}{q-1}}(\dom)))$ and 
	\begin{equation}\label{integrability:D}
\E\left(\sum\limits_{k\geq1}\int\limits_{0}^{T}\norm{D_s(e_k)}_{L^{\frac{q}{q-1}}(\dom)}^2\,\rd s\right)^{\la}<\f\mbox{ for }1\leq \la<\f.
	\end{equation}
	Let $\{\zeta_\e\}$ be the standard mollifiers sequence in space variable and we denote $h^\e=h*\zeta_\e$. Then we have the following up to a subsequence,
	\begin{equation}
	\int\limits_{0}^{T}\left(\int\limits_{\dom} Y_s^\e D_s^\e\rd x\right)\rd W_s\rr\int\limits_{0}^{T}\left(\int\limits_{\dom} Y_s D_s\rd x\right)\rd W_s\mbox{ as }\e\rr0\mbox{ in }\Pp-\mbox{a.s.}
	\end{equation}
\end{lemma}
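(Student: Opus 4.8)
The plan is to reduce the almost-sure statement to an $L^{2}(\Omega)$-convergence, which It\^o's isometry makes accessible, and then extract an a.s.\ convergent subsequence. First I would expand both stochastic integrals along the orthonormal basis $\{e_{k}\}$ of $\U$ and observe that $s\mapsto\bigl(e_{k}\mapsto\int_{\dom}Y_{s}D_{s}(e_{k})\,\rd x\bigr)$ is a progressively measurable $L_{2}(\U;\R)$-valued process: by H\"older's inequality with exponents $q$ and $\frac{q}{q-1}$,
\[
\sum_{k\geq1}\abs{\int_{\dom}Y_{s}D_{s}(e_{k})\,\rd x}^{2}\leq\norm{Y_{s}}_{L^{q}(\dom)}^{2}\,\sum_{k\geq1}\norm{D_{s}(e_{k})}_{L^{\frac{q}{q-1}}(\dom)}^{2},
\]
so \eqref{integrability:Y}--\eqref{integrability:D} guarantee that $\int_{0}^{T}\bigl(\int_{\dom}Y_{s}D_{s}\,\rd x\bigr)\rd W_{s}$ is a well-defined square-integrable martingale; the same holds for the mollified integrand, since spatial mollification commutes with evaluation on $e_{k}$ (so that $D_{s}^{\e}(e_{k})=(D_{s}(e_{k}))^{\e}$) and preserves progressive measurability. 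By It\^o's isometry it then suffices to show
\[
\E\int_{0}^{T}\sum_{k\geq1}\abs{\int_{\dom}\bigl(Y_{s}^{\e}D_{s}^{\e}(e_{k})-Y_{s}D_{s}(e_{k})\bigr)\,\rd x}^{2}\rd s\longrightarrow0\quad(\e\rr0),
\]
since $L^{2}(\Omega)$-convergence of the stochastic integrals implies convergence in probability, hence a.s.\ convergence along a subsequence.

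Next I would split $Y_{s}^{\e}D_{s}^{\e}(e_{k})-Y_{s}D_{s}(e_{k})=Y_{s}^{\e}\bigl((D_{s}(e_{k}))^{\e}-D_{s}(e_{k})\bigr)+(Y_{s}^{\e}-Y_{s})D_{s}(e_{k})$, apply H\"older in $x$ and sum in $k$ to obtain, for a universal constant $C$,
\[
\sum_{k\geq1}\abs{\int_{\dom}\bigl(Y_{s}^{\e}D_{s}^{\e}(e_{k})-Y_{s}D_{s}(e_{k})\bigr)\rd x}^{2}\leq C\norm{Y_{s}^{\e}}_{L^{q}}^{2}\norm{D_{s}^{\e}-D_{s}}_{L_{2}(\U;L^{\frac{q}{q-1}})}^{2}+C\norm{Y_{s}^{\e}-Y_{s}}_{L^{q}}^{2}\norm{D_{s}}_{L_{2}(\U;L^{\frac{q}{q-1}})}^{2}.
\]
Young's convolution inequality ($\norm{\zeta_{\e}}_{L^{1}}=1$) gives $\norm{Y_{s}^{\e}}_{L^{q}}\leq\norm{Y_{s}}_{L^{q}}$, $\norm{Y_{s}^{\e}-Y_{s}}_{L^{q}}\leq2\norm{Y_{s}}_{L^{q}}$ and $\norm{D_{s}^{\e}-D_{s}}_{L_{2}(\U;L^{\frac{q}{q-1}})}\leq2\norm{D_{s}}_{L_{2}(\U;L^{\frac{q}{q-1}})}$, so the right-hand side is dominated, uniformly in $\e$, by $C'\norm{Y_{s}}_{L^{q}}^{2}\norm{D_{s}}_{L_{2}(\U;L^{\frac{q}{q-1}})}^{2}$; by Cauchy--Schwarz in $\Omega$ together with \eqref{integrability:Y}--\eqref{integrability:D} for $\la=2$,
\[
\E\int_{0}^{T}\norm{Y_{s}}_{L^{q}}^{2}\norm{D_{s}}_{L_{2}(\U;L^{\frac{q}{q-1}})}^{2}\rd s\leq\Bigl(\E\sup_{s\in[0,T]}\norm{Y_{s}}_{L^{q}}^{4}\Bigr)^{1/2}\Bigl(\E\Bigl(\int_{0}^{T}\norm{D_{s}}_{L_{2}(\U;L^{\frac{q}{q-1}})}^{2}\rd s\Bigr)^{2}\Bigr)^{1/2}<\f ,
\]
which serves as an $\e$-independent, integrable majorant.

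It remains to check pointwise convergence and conclude by dominated convergence. For a.e.\ $(\omega,s)$ one has $Y_{s}\in L^{q}(\dom)$ and $D_{s}(e_{k})\in L^{\frac{q}{q-1}}(\dom)$ for every $k$ with $\sum_{k}\norm{D_{s}(e_{k})}_{L^{\frac{q}{q-1}}}^{2}<\f$; the classical mollifier estimates then give $\norm{Y_{s}^{\e}-Y_{s}}_{L^{q}}\rr0$ and $\norm{(D_{s}(e_{k}))^{\e}-D_{s}(e_{k})}_{L^{\frac{q}{q-1}}}\rr0$ for each $k$, and since each of these is bounded by $4\norm{D_{s}(e_{k})}_{L^{\frac{q}{q-1}}}^{2}$, dominated convergence in $k$ yields $\norm{D_{s}^{\e}-D_{s}}_{L_{2}(\U;L^{\frac{q}{q-1}})}\rr0$. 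Hence the integrand in the It\^o isometry tends to $0$ for a.e.\ $(\omega,s)$, and dominated convergence on $\Omega\times(0,T)$ with the majorant found above yields the claimed limit. I expect the only real difficulty to be this construction of an $\e$-uniform integrable majorant: it is exactly what forces the higher-moment hypotheses in \eqref{integrability:Y}--\eqref{integrability:D} and the use of Cauchy--Schwarz in $\omega$; checking that mollification commutes with $e_{k}\mapsto D_{s}(e_{k})$ and preserves progressive measurability is routine.
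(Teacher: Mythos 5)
Your proposal is correct and follows essentially the same route as the paper's proof: the same splitting $Y_s^\e D_s^\e-Y_sD_s=Y_s^\e(D_s^\e-D_s)+(Y_s^\e-Y_s)D_s$, the same H\"older estimate in $x$ and $k$, the same use of the higher-moment hypotheses via Cauchy--Schwarz in $\Omega$ to produce an $\e$-uniform integrable majorant, dominated convergence, and extraction of an a.s.\ convergent subsequence from mean-square convergence. The only cosmetic difference is that you invoke It\^o's isometry where the paper uses the B\"urkholder--Davis--Gundy inequality, which at the level of second moments is the same estimate.
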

We note that proof of Lemma \ref{lemma:limit_pass_diff} follows from a standard application of B\"urkhholder-Davis-Gundy inequality \cite{bfh-book,gm}. For the sake of completeness we give a proof in Appendix.

\begin{lemma}\label{lemma:limit_pass_drift1}
	Let $X_t,Y_t$ be a stochastic process on $(\Omega,\mathfrak{F},(\mathfrak{F}_t)_{t\geq0},\Pp)$ such that
	\begin{align}
	& X_t\in L^{\f}(0,T;L^{p}(\dom,\R^N)),\mbox{ and }\E\left[\sup\limits_{t\in[0,T]}\norm{X_t}^{2\la}_{L^{p}}\right]<\f\mbox{ for }1\leq\lambda<\f,\\
	& Y_t\in L^{\f}(0,T;L^{q}(\dom,\R^N)),\mbox{ and }\E\left[\sup\limits_{t\in[0,T]}\norm{Y_t}^{2\la}_{L^{q}}\right]<\f\mbox{ for }1\leq\lambda<\f,
	\end{align}
	such that $1\leq p,q\leq \f$ satisfying $p^{-1}+q^{-1}=1$. 
	Let $\{\zeta_\e\}_{\e>0}$ be a standard mollifiers sequence in space variable and we denote $h^\e=h*\zeta_\e$. Then we have the following up to a subsequence,
	\begin{equation}
	\int\limits_{0}^{T}\int\limits_{\dom}X_s^{\e}\cdot Y_s\,\rd x\rd s\rr	\int\limits_{0}^{T}\int\limits_{\dom}X_s\cdot Y_s\,\rd x\rd s
	\mbox{ as }\e\rr0\mbox{ in }\Pp-\mbox{a.s.}
	\end{equation}
\end{lemma}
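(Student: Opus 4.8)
The plan is to fix a single realization and reduce the statement to classical mollifier estimates together with two applications of the dominated convergence theorem. By the pathwise integrability assumptions (equivalently by the finite moment bounds), there is a set $\Omega_0\subset\Omega$ with $\Pp(\Omega_0)=1$ such that for every $\omega\in\Omega_0$ one has $X(\omega)\in L^\f(0,T;L^p(\dom,\R^N))$ and $Y(\omega)\in L^\f(0,T;L^q(\dom,\R^N))$, with finite norms $C_X(\omega)$ and $C_Y(\omega)$. I fix such an $\omega$ and suppress it from the notation for the rest of the argument; all convergences below are then deterministic, and they will in particular hold along any chosen subsequence.

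First I would establish the pointwise-in-time convergence of the spatial pairings. For a.e.\ $s\in(0,T)$ we have $X_s\in L^p(\dom)$ and $Y_s\in L^q(\dom)$, and since $\{\zeta_\e\}$ is a standard mollifier, Young's inequality gives $\norm{X_s^\e}_{L^p}\leq\norm{X_s}_{L^p}$. If $p<\f$, then $X_s^\e\rr X_s$ in $L^p(\dom)$, and H\"older's inequality yields
\[
\abs{\int\limits_\dom X_s^\e\cdot Y_s\,\rd x-\int\limits_\dom X_s\cdot Y_s\,\rd x}\leq\norm{X_s^\e-X_s}_{L^p}\norm{Y_s}_{L^q}\rr0 .
\]
If instead $p=\f$ (so $q=1$), then $X_s^\e\rr X_s$ a.e.\ on $\dom$ with $\abs{X_s^\e\cdot Y_s}\leq\norm{X_s}_{L^\f}\abs{Y_s}\in L^1(\dom)$, and dominated convergence on $\dom$ again gives $\int_\dom X_s^\e\cdot Y_s\,\rd x\rr\int_\dom X_s\cdot Y_s\,\rd x$. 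In either case, writing $g_\e(s):=\int_\dom X_s^\e\cdot Y_s\,\rd x$ and $g(s):=\int_\dom X_s\cdot Y_s\,\rd x$, we obtain $g_\e(s)\rr g(s)$ for a.e.\ $s\in(0,T)$.

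Next I would produce a time-integrable dominating function for $g_\e$. Using Young's inequality and H\"older's inequality once more,
\[
\abs{g_\e(s)}\leq\norm{X_s^\e}_{L^p}\norm{Y_s}_{L^q}\leq\norm{X_s}_{L^p}\norm{Y_s}_{L^q}\leq C_X(\omega)\,C_Y(\omega)\quad\text{for a.e. }s\in(0,T),
\]
so $\abs{g_\e}$ is bounded, uniformly in $\e$, by the constant $C_X(\omega)C_Y(\omega)\in L^1(0,T)$. Dominated convergence in the variable $s$ then gives $\int_0^T g_\e(s)\,\rd s\rr\int_0^T g(s)\,\rd s$ as $\e\rr0$, which is exactly the asserted convergence. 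Since this holds for the full sequence $\e\rr0$ and every $\omega\in\Omega_0$, it holds a fortiori along any subsequence.

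The argument is elementary; the only point requiring care is the endpoint case $p=\f$ (equivalently $q=\f$), where mollification fails to converge in the $L^\f$ norm, so one replaces norm convergence of $X_s^\e$ by a.e.\ convergence on $\dom$ and invokes dominated convergence a second time, now in the space variable. No genuine obstacle appears, and no stochastic machinery is needed — in contrast with Lemma~\ref{lemma:limit_pass_diff}, the pairing in Lemma~\ref{lemma:limit_pass_drift1} involves only a Lebesgue integral in time.
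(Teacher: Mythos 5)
Your argument is correct. The paper does not actually supply a proof of this lemma---it states only that the result ``follows from standard properties of $L^p$ functions''---and your proof (pathwise reduction, Young's and H\"older's inequalities, mollifier convergence in $L^p$ for $p<\f$ with the separate a.e.-convergence treatment of the endpoint case, and dominated convergence in time) is exactly that standard argument, and in fact yields convergence of the full sequence rather than merely a subsequence.
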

We omit the proof of Lemma \ref{lemma:limit_pass_drift1} and it follows from standard properties of $L^p$ functions.

%
Next we state two commutator estimate lemmas: one is useful for proving energy balance equation and other one is helpful in the context of pathwise uniqueness.
      \begin{lemma}[Commutator estimate I, \cite{cwt,fgSGw,gmSG}]\label{lemma:commutator}
		Let $N,M\in\mathbb{N}$ and $\mathscr{O},\mathscr{O}_1\subset\R^d$ be two open bounded sets satisfying $\bar{\mathscr{O}}\subset\mathscr{O}_1$. Let $g\in B^{\al,\f}_3(\mathscr{O}_1,\R^M)$ and $h\in B^{\B,\f}_3(\mathscr{O}_1,\R^{M})$ for $\al,\B\in(0,1)$. Let $\mathcal{F}:\mathscr{U}\rr\R^{N\times M}$ be a $C^2$ function where $\mathscr{U}$ is the smallest open convex set containing closure of range of $g$. Then we have
		\begin{equation}
		\|\left(\mathcal{F}(g_\e)-\mathcal{F}(g)_\e\right):\nabla_x h_\e\|_{L^{1}(\mathscr{O})}\leq C_0\abs{g}_{B^{\al,\f}_3(\mathscr{O}_1)}^2\abs{h}_{B^{\B,\f}_3(\mathscr{O})}\e^{2\al+\B-1},
		\end{equation}
		where $C_0$ depends on $\sup\{\abs{\nabla_u^2\mathcal{F}};\,u\in\mathscr{U}\}$ and domain $\mathscr{O}$.
\end{lemma}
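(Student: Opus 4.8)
\emph{Proof proposal.} The plan is to run the Constantin--E--Titi commutator argument \cite{cwt}, carried out for a general $C^2$ nonlinearity. Fix $\e>0$ small enough that $\e<\dist(\bar{\mathscr{O}},\partial\mathscr{O}_1)$, so that every translate by a vector of length $\le\e$ of a point of $\mathscr{O}$ stays in $\mathscr{O}_1$, and write $g_\e(x)=\int\zv(y)\,g(x-y)\,\rd y$, $\mathcal{F}(g)_\e(x)=\int\zv(y)\,\mathcal{F}(g(x-y))\,\rd y$. The starting point is a pointwise Taylor expansion of $\mathcal{F}$ about $g_\e(x)$: since $\mathscr{U}$ is convex and contains the closure of the range of $g$, both $g_\e(x)$ (an integral average of values of $g$) and each $g(x-y)$ lie in $\mathscr{U}$, so Taylor's theorem with second-order remainder gives $\mathcal{F}(g(x-y))=\mathcal{F}(g_\e(x))+\nabla_u\mathcal{F}(g_\e(x))\,(g(x-y)-g_\e(x))+R(x,y)$ with $\abs{R(x,y)}\le\tfrac12\sup_{\mathscr{U}}\abs{\nabla_u^2\mathcal{F}}\,\abs{g(x-y)-g_\e(x)}^2$. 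Multiplying by $\zv(y)$ and integrating in $y$ annihilates the linear term because $\int\zv(y)\,(g(x-y)-g_\e(x))\,\rd y=0$, and combining with the elementary bound $\abs{g(x)-g_\e(x)}^2\le\int\zv(z)\abs{g(x)-g(x-z)}^2\,\rd z$ (Jensen) one obtains the pointwise estimate
\[
\abs{\mathcal{F}(g_\e)(x)-\mathcal{F}(g)_\e(x)}\le C\sup_{\mathscr{U}}\abs{\nabla_u^2\mathcal{F}}\int\zv(y)\abs{g(x-y)-g(x)}^2\,\rd y .
\]

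Next I would pass to norms. For the first factor, Minkowski's integral inequality in $L^{3/2}(\mathscr{O})$ together with the definition of the Besov seminorm (legitimate since $\mathscr{O}-y\subset\mathscr{O}_1\subset\dom$ for $y\in\spt\zv$) gives
\[
\norm{\mathcal{F}(g_\e)-\mathcal{F}(g)_\e}_{L^{3/2}(\mathscr{O})}\le C\sup_{\mathscr{U}}\abs{\nabla_u^2\mathcal{F}}\int\zv(y)\,\norm{g(\cdot-y)-g}^2_{L^3(\mathscr{O})}\,\rd y\le C\sup_{\mathscr{U}}\abs{\nabla_u^2\mathcal{F}}\,\abs{g}^2_{B^{\al,\f}_3(\mathscr{O}_1)}\,\e^{2\al},
\]
where I used $\int\zv(y)\abs{y}^{2\al}\,\rd y\le\e^{2\al}$. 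For the second factor, writing $\nabla_x h_\e(x)=\int\nabla\zv(y)\,(h(x-y)-h(x))\,\rd y$ (valid because $\int\nabla\zv=0$) and using $\abs{\nabla\zv(y)}\le C\e^{-d-1}$ on $\{\abs{y}\le\e\}$, the same Besov estimate yields
\[
\norm{\nabla_x h_\e}_{L^3(\mathscr{O})}\le\int\abs{\nabla\zv(y)}\,\norm{h(\cdot-y)-h}_{L^3(\mathscr{O})}\,\rd y\le C\,\abs{h}_{B^{\B,\f}_3(\mathscr{O})}\,\e^{-d-1}\int_{\abs{y}\le\e}\abs{y}^{\B}\,\rd y\le C\,\abs{h}_{B^{\B,\f}_3(\mathscr{O})}\,\e^{\B-1} .
\]
Finally, Hölder's inequality on $\mathscr{O}$ with the conjugate pair $(3/2,3)$ applied to $(\mathcal{F}(g_\e)-\mathcal{F}(g)_\e):\nabla_x h_\e$, multiplied by the two displayed bounds, produces exactly the asserted inequality, with $C_0$ depending only on $\sup_{\mathscr{U}}\abs{\nabla_u^2\mathcal{F}}$, the mollifier, and $\mathscr{O}$.

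I do not expect a real obstacle here, since this is a classical estimate; the two points that genuinely need care are (i) checking that $g_\e(x)\in\mathscr{U}$ so that the second-order Taylor remainder is controlled by $\sup_{\mathscr{U}}\abs{\nabla_u^2\mathcal{F}}$ — this is precisely where convexity of $\mathscr{U}$ is used, and where one must also record that $g_\e\to g$ keeps values in $\mathscr{U}$ — and (ii) the domain bookkeeping ensuring that every translate arising in the two mollifications lies in $\mathscr{O}_1$, which holds for all $\e<\dist(\bar{\mathscr{O}},\partial\mathscr{O}_1)$ and hence throughout the $\e\to0$ limit in which the lemma is applied.
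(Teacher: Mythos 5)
Your argument is correct and is exactly the standard Constantin--E--Titi commutator computation (second-order Taylor expansion about $g_\e(x)$, cancellation of the linear term, Jensen plus Minkowski to get the $\e^{2\al}$ bound in $L^{3/2}$, the zero-mean trick $\nabla_x h_\e=\int\nabla\zeta_\e(y)(h(\cdot-y)-h)\,\rd y$ for the $\e^{\B-1}$ bound in $L^3$, then H\"older with the pair $(3/2,3)$). The paper itself omits the proof and defers to the cited references, whose argument is the same as yours, so there is nothing further to reconcile.
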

We omit the proof of Lemma \ref{lemma:commutator}. It can be found in \cite{fgSGw,gmSG}.
      \begin{lemma}[Commutator estimate II,  \cite{fgj}]\label{lemma:commutator1}
	Let $N,M\in\mathbb{N}$ and $\mathscr{V},\mathscr{V}_1\subset\R^d$ be open sets such that $\bar{\mathscr{V}}\subset\mathscr{V}_1$. Let $\vartheta\in B^{\al,\f}_q(\mathscr{V}_1,\R^M)$ for some $\al\in(0,1)$ and $q\geq2$. Let $\mathscr{W}$ be an open convex set such that $\overline{Image(\vartheta)}\subset\mathscr{W}$. Let $\mathcal{Q}:\mathscr{W}\rr\R^{N\times M}$ be a $C^2$ function. Then we have
	\begin{equation}
	\|\nabla_x\left(\mathcal{Q}(\vartheta_\e)-\mathcal{Q}(\vartheta)_\e\right)\|_{L^{q/2}(\mathscr{V})}\leq C_0\abs{\vartheta}_{B^{\al,\f}_q(\mathscr{V})}^2\e^{2\al-1},
	\end{equation}
	where $C_0$ depends on $\sup\{\abs{\nabla_u^2\mathcal{Q}};\,u\in\mathscr{W}\}$ and domain $\mathscr{V}$.
\end{lemma}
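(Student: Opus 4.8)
The plan is to follow the Constantin--E--Titi style commutator argument adapted to the Besov scale, as in \cite{cwt,fgSGw}. Write $\vartheta^\e=\vartheta*\zeta_\e$ and restrict to $\e$ small enough that $\mathscr{V}+\spt\zeta_\e\subset\mathscr{V}_1$ (i.e. $\e<\dist(\mathscr{V},\partial\mathscr{V}_1)$). Since $\zeta_\e$ is a probability density, $\vartheta^\e(x)$ is a convex average of the values $\vartheta(x-z)$, $\abs z\le\e$, hence lies in the convex hull of $\overline{Image(\vartheta)}\subset\mathscr{W}$; thus $\mathcal{Q}(\vartheta^\e)$ is well defined and every Taylor expansion below takes place along a segment contained in $\mathscr{W}$, where $\nabla_u^2\mathcal{Q}$ is bounded.

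The crux is an algebraic identity exhibiting cancellation. Differentiating and using $\int_{\R^d}\nabla\zeta_\e=0$, for each coordinate $i$ we have $\partial_i\vartheta^\e(x)=\int\partial_i\zeta_\e(z)\bigl(\vartheta(x-z)-\vartheta(x)\bigr)\,\rd z$ and, likewise, $\partial_i\bigl(\mathcal{Q}(\vartheta)^\e\bigr)(x)=\int\partial_i\zeta_\e(z)\bigl(\mathcal{Q}(\vartheta(x-z))-\mathcal{Q}(\vartheta(x))\bigr)\,\rd z$. Taylor expanding $\mathcal{Q}(\vartheta(x-z))$ about $\vartheta(x)$ to second order, with remainder $S(x,z)$ obeying $\abs{S(x,z)}\le\tfrac12\sup_{\mathscr{W}}\abs{\nabla_u^2\mathcal{Q}}\,\abs{\vartheta(x-z)-\vartheta(x)}^2$, and cancelling the first-order contribution against $\nabla_u\mathcal{Q}(\vartheta(x))\cdot\partial_i\vartheta^\e(x)$, one obtains
\[
\partial_i\bigl(\mathcal{Q}(\vartheta^\e)-\mathcal{Q}(\vartheta)^\e\bigr)(x)=\underbrace{\bigl(\nabla_u\mathcal{Q}(\vartheta^\e(x))-\nabla_u\mathcal{Q}(\vartheta(x))\bigr)\cdot\partial_i\vartheta^\e(x)}_{=:A_i(x)}\;-\;\underbrace{\int\partial_i\zeta_\e(z)\,S(x,z)\,\rd z}_{=:B_i(x)}.
\]

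It then remains to estimate $A_i$ and $B_i$ in $L^{q/2}(\mathscr{V})$, using the mollifier bound $\abs{\partial_i\zeta_\e(z)}\le C\e^{-d-1}\mathbf 1_{\abs z\le\e}$ and the one-parameter Besov bound $\norm{\vartheta(\cdot-z)-\vartheta(\cdot)}_{L^q(\mathscr{V})}\le C\abs\vartheta_{B^{\al,\f}_q(\mathscr{V}_1)}\abs z^\al$ (the passage to the $\mathscr{V}$-seminorm appearing in the statement costs only a geometric constant absorbed into $C_0$). For $B_i$, Minkowski's integral inequality in $L^{q/2}$ (valid since $q\ge2$) together with $\bigl\|\,\abs{\vartheta(\cdot-z)-\vartheta(\cdot)}^2\bigr\|_{L^{q/2}(\mathscr{V})}=\norm{\vartheta(\cdot-z)-\vartheta(\cdot)}_{L^q(\mathscr{V})}^2$ gives $\norm{B_i}_{L^{q/2}(\mathscr{V})}\le C\abs\vartheta_{B^{\al,\f}_q}^2\int\abs{\partial_i\zeta_\e(z)}\abs z^{2\al}\,\rd z\le C\abs\vartheta_{B^{\al,\f}_q}^2\e^{2\al-1}$. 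For $A_i$, the mean-value theorem and the $C^2$ bound give $\abs{\nabla_u\mathcal{Q}(\vartheta^\e)-\nabla_u\mathcal{Q}(\vartheta)}\le \sup_{\mathscr{W}}\abs{\nabla_u^2\mathcal{Q}}\,\abs{\vartheta^\e-\vartheta}$; then H\"older with exponents $(q,q)$ (so the product lies in $L^{q/2}$) combined with $\norm{\vartheta^\e-\vartheta}_{L^q(\mathscr{V})}\le C\abs\vartheta_{B^{\al,\f}_q}\e^\al$ and $\bigl\|\int\partial_i\zeta_\e(z)(\vartheta(\cdot-z)-\vartheta(\cdot))\,\rd z\bigr\|_{L^q(\mathscr{V})}\le C\abs\vartheta_{B^{\al,\f}_q}\int\abs{\partial_i\zeta_\e(z)}\abs z^\al\,\rd z\le C\abs\vartheta_{B^{\al,\f}_q}\e^{\al-1}$ yields $\norm{A_i}_{L^{q/2}(\mathscr{V})}\le C\abs\vartheta_{B^{\al,\f}_q}^2\e^{2\al-1}$. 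Summing over $i$ gives the claim.

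The step that requires care — and the reason the bound scales like $\e^{2\al-1}$ rather than the $\e^{\al-1}$ produced by naively estimating $\nabla\vartheta^\e$ — is the cancellation in the boxed identity: only after rewriting $\nabla\zeta_\e*\mathcal{Q}(\vartheta)$ and $\nabla\vartheta^\e$ in ``increment'' form (legitimate because $\int\nabla\zeta_\e=0$) and matching the first-order Taylor term is one left with (i) a difference of gradients of $\mathcal{Q}$ that is itself $O(\e^\al)$ in $L^q$, and (ii) a genuinely quadratic remainder, each factor then supplying one power $\abs z^\al$. The remaining bookkeeping — keeping all translates inside $\mathscr{V}_1$ and using convexity of $\mathscr{W}$ to keep $\vartheta^\e$ and the Taylor segments in the domain of $\mathcal{Q}$ — is routine. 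This is precisely the argument of \cite{fgj}, to which we refer for complete details.
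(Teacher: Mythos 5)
Your argument is correct, and it is essentially the proof the paper relies on: the paper itself omits the proof of Lemma \ref{lemma:commutator1} and defers to \cite{fgj}, where exactly this Constantin--E--Titi-type decomposition is used, i.e.\ rewriting $\nabla\zeta_\e*\mathcal{Q}(\vartheta)$ in increment form via $\int\nabla\zeta_\e=0$, Taylor-expanding to cancel the first-order term against $\nabla_u\mathcal{Q}(\vartheta)\cdot\nabla\vartheta^\e$, and estimating the two resulting terms by H\"older/Minkowski with one factor $\e^{\al}$ and one factor $\e^{\al-1}$. Your handling of the side conditions (translates staying in $\mathscr{V}_1$, convexity of $\mathscr{W}$ keeping $\vartheta^\e$ and the Taylor segments in the domain of $\mathcal{Q}$) matches what is needed, so there is nothing to add.
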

The proof of Lemma \ref{lemma:commutator1} is omitted here and it can be found in \cite{fgj}.
	
\section{Relative energy inequality}\label{sec:rel}	
	In this section we prove a technical lemma  for proving uniqueness result and it also sets a ground to prove energy balance as well. In order to prove both the theorems 
	we mollify solutions only in space variable we use the following form of relative energy for the system \eqref{SCE1}--\eqref{SCE2}, 
	\begin{equation}
	\mathcal{E}(\varrho,\mathbf{m}|r,\nbf):=\int\limits_{\dom}\left(\frac{\varrho}{2}\abs{\frac{\mbf}{\varrho}-\frac{\nbf}{r}}^2+P(\varrho)-(\varrho-r)P^{\p}(r)-P(r)\right)\rd x.
	\end{equation}
	Note that it measures the distance between two weak solutions. If $(\varrho,\mbf)$ is a weak solution satisfying admissible criteria and $(r,\nbf)$ is a process having regularity in space then we can have a estimate for $\mathcal{E}(\varrho,\mathbf{m}|r,\nbf)(t_2)-\mathcal{E}(\varrho,\mathbf{m}|r,\nbf)(t_1)$ for two different time $0\leq t_1\leq t_2<\tau$. More precisely, we prove the following result:
	
	\begin{proposition}[Relative energy inequality]\label{prop:rel_energy} 	Let $[\varrho,\mbf,\tau]$ be a pathwise weak solution satisfying energy inequality \eqref{ineq:energy_integral}. Let $r,\nbf $ be two stochastic processes on $(\Omega,\F,\{\F_t\}_{t\geq0},\Pp)$ determined by
	\begin{align}
	\rd r&=F_s\rd s+A_s\rd W_s,\label{SDE:r}\\
	\rd \nbf&=G_s\rd s+B_s\rd W_s,\label{SDE:n}
	\end{align}
	where $F_s,G_s,A_s,B_s$ are progressively measurable such that the following holds for all $1\leq q<\f$,
	\begin{equation}
	\begin{array}{lll}
	&F_s\in L^{q}(\Omega;L^q(0,T;C^{1}(\dom))),\,&A_s\in L^{2}(\Omega;L^2(0,T;L_2(\U;L^{2}(\dom))),\\
	&G_s\in L^{q}(\Omega;L^q(0,T;C^1(\dom,\R^d))),\,&B_s\in L^{2}(\Omega;L^2(0,T;L_2(\U;L^{2}(\dom,\R^d)))),\\
%
	&\E\left[\int\limits_{0}^{T}\sum\limits_{k\geq1}\abs{A_s(e_k)}^q\rd x\rd s\right]<\f,	&\E\left[\int\limits_{0}^{T}\sum\limits_{k\geq1}\abs{B_s(e_k)}^q\rd x\rd s\right]<\f.
		\end{array}	
	\end{equation}
    We assume that
    \begin{align}
    r\in C([0,T];C^1(\dom)),\,\nbf\in C([0,T];C^1(\dom,\R^d)),\\
    \E\left(\sup\limits_{0\leq t\leq T}\|r\|_{C^1(\dom)}^2\right)^q+\E\left(\sup\limits_{0\leq t\leq T}\|\nbf\|_{C^1(\dom,\R^d)}^2\right)^q\leq c(q)
    \end{align}
    for some constant $c(q)$ depending on $q$  for all $1\leq q<\f$. Then for $0\leq t_1\leq t_2< \tau$, we have
	\begin{equation}
	\mathcal{E}(\varrho,\mbf|r,\nbf)(t_2)\leq \mathcal{E}(\varrho,\mbf|r,\nbf)(t_1)+\int\limits_{t_1}^{t_2}M_{rem}\rd W_s+\int\limits_{t_1}^{t_2}\mathcal{R}(\varrho,\mbf|r,\nbf)(s)\,\rd s,
	\end{equation}
	where $\mathcal{R}(\varrho,\mbf|r,\nbf)(s)$ is defined as 
	\begin{align}
	\mathcal{R}(\varrho,\mbf|r,\nbf)(s)&:=\int\limits_{\dom}\left((r-\varrho) P^{\p\p}(r)+\frac{\mbf\cdot\nbf}{r^2}-\frac{\varrho\abs{\nbf}^2}{r^3}\right)F_s
	+\frac{\varrho}{r}\left(\frac{\nbf}{r}-\frac{\mbf}{\varrho}\right)\cdot G_s\,\rd x\nonumber\\
	&+\int\limits_{\dom}\frac{1}{2}(P^{\p\p}(r)+P^{\p\p\p}(r)(r-\varrho))\sum\limits_{k=1}^{\f}\abs{A_s(e_k)}^2\,\rd x\nonumber\\
	&+\int\limits_{\dom}\left(\sum\limits_{k=1}^{\f}\abs{A_s(e_k)}^2\left(\frac{3\varrho\abs{\nbf}^2}{2r^4}-\frac{\mbf\cdot\nbf}{r^3}\right)	-\sum\limits_{k\geq1}A_s(e_k)B_s(e_k)\cdot\left(\frac{2\varrho\nbf}{r^3}-\frac{\mbf}{r^2}\right)\right)\rd x\nonumber\\
	&+\int\limits_{\dom}\sum\limits_{k\geq1}\left(\frac{\nbf\cdot \G(\varrho,\mbf)(e_k) }{r^2}A_s(e_k)+\frac{\varrho}{2}\abs{\frac{B_s(e_k)}{r}-\frac{\G(\varrho,\mbf)(e_k)}{\varrho}}^2\right)\rd x\nonumber\\
	&+\int\limits_{\dom}\left(\mbf\otimes\left(\frac{\nbf}{r}-\frac{\mbf}{\varrho}\right):\nabla_x\frac{\nbf}{r}-p(\varrho)\dv_x\frac{\nbf}{r}-\mbf\cdot\nabla_xP^{\p}(r)\right)\rd x,
	\end{align}
	and the process $M_{rem}$ is defined as 
	\begin{align}
	M_{rem}&:=\int\limits_{\dom}\left(\left(\frac{\nbf}{r^2}A_s-\frac{B_s}{r}\right)\cdot\mbf+\G(\varrho,\mbf)\cdot\left(\frac{\mbf}{\varrho}-\frac{\nbf}{r}\right)-\varrho\frac{\abs{\nbf}^2}{r^3}A_s+\varrho\frac{B_s\cdot\nbf}{r^2}\right)\rd x\nonumber\\
	&+\int\limits_{\dom}(r-\varrho) P^{\p\p}(r)A_s\,\rd x.
	\end{align}
	\end{proposition}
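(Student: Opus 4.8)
The plan is to derive the relative energy inequality by combining three ingredients: the energy inequality \eqref{ineq:energy_integral} satisfied by $[\varrho,\mbf,\tau]$, an expansion of the "cross terms" $\int_\dom (\varrho-r)P^\p(r)\,\rd x$ and $\int_\dom \varrho\,\frac{\mbf}{\varrho}\cdot\frac{\nbf}{r}\,\rd x = \int_\dom \frac{\mbf\cdot\nbf}{r}\,\rd x$ obtained by the stochastic It\^o product rule, and the weak formulations \eqref{eq:com:wk1}--\eqref{eq:com:wk2} together with the SDEs \eqref{SDE:r}--\eqref{SDE:n} for $r,\nbf$. Concretely, I would rewrite
\[
\mathcal{E}(\varrho,\mbf|r,\nbf)=\int\limits_{\dom}\left(\frac{\abs{\mbf}^2}{2\varrho}+P(\varrho)\right)\rd x
-\int\limits_{\dom}\frac{\mbf\cdot\nbf}{r}\,\rd x
+\int\limits_{\dom}\frac{\varrho\abs{\nbf}^2}{2r^2}\,\rd x
-\int\limits_{\dom}(\varrho-r)P^\p(r)\,\rd x-\int\limits_{\dom}P(r)\,\rd x,
\]
so that the first bracket is controlled by \eqref{ineq:energy_integral}, and the remaining four terms are functionals of $(\varrho,\mbf)$ tested against smooth-in-space functions of $(r,\nbf)$ — which is exactly the regularity we have assumed for $r,\nbf$, so the weak formulations apply with test functions such as $\phi=P^\p(r)$, $\bvr=\nbf/r$, etc.

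The main computation is to apply It\^o's formula term by term. For $-\int_\dom \frac{\mbf\cdot\nbf}{r}\,\rd x$ and $\int_\dom\frac{\varrho\abs{\nbf}^2}{2r^2}\,\rd x$ one differentiates the products: the $\varrho$- and $\mbf$-differentials are supplied by \eqref{eq:com:wk1}--\eqref{eq:com:wk2} (these contribute the drift terms with $\G$, the convective/pressure terms $\mbf\otimes(\cdots):\nabla_x(\nbf/r)$, $p(\varrho)\dv_x(\nbf/r)$, and the stochastic integral $\int\G(\varrho,\mbf)\cdot(\cdots)\rd W_s$), while the $r$- and $\nbf$-differentials are supplied by \eqref{SDE:r}--\eqref{SDE:n} (contributing the $F_s,G_s$ drift terms, the $A_s,B_s$ stochastic terms, and — crucially — the quadratic It\^o correction terms $\sum_k\abs{A_s(e_k)}^2$, $\sum_k A_s(e_k)B_s(e_k)$ and the cross-variation $\sum_k \G(\varrho,\mbf)(e_k)\cdot(\cdots)A_s(e_k)$ coming from $\rd\langle W,W\rangle$). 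Similarly $-\int_\dom (\varrho-r)P^\p(r)\,\rd x$ is expanded using $\rd\varrho$ from \eqref{eq:com:wk1}, $\rd r$ from \eqref{SDE:r}, and the It\^o correction $\tfrac12 P^{\p\p\p}(r)(\cdots)\sum_k\abs{A_s(e_k)}^2$ plus the Stratonovich-type term from differentiating $P^\p(r)$; the $-\int_\dom P(r)\,\rd x$ term uses $\varrho P^{\p\p}=p^\p$ and $\varrho P^\p = P+p$ from \eqref{derivative:P}. Collecting every drift contribution yields $\mathcal{R}(\varrho,\mbf|r,\nbf)$ and every $\rd W_s$-coefficient yields $M_{rem}$ plus the martingale already present in \eqref{ineq:energy_integral}; the inequality (rather than equality) is inherited solely from \eqref{ineq:energy_integral}, all other manipulations being identities.

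The main obstacle — and the reason this is stated as a standalone proposition rather than folded into the uniqueness proof — is \emph{justifying} the It\^o calculus and the use of these non-smooth test functions. The functions $P^\p(r)$, $\nbf/r$, etc. are only $C^1$ in space (uniformly in time, with the stated moment bounds), not compactly supported test functions in the strict sense of Definition~\ref{def:pws}, and the products of the $(\varrho,\mbf)$-process with the $(r,\nbf)$-process must be shown to be genuine It\^o processes. I would handle this by a density/approximation argument: first prove the identity for smooth compactly supported test functions on $\dom$ (which is a torus, so "compactly supported" is not restrictive once one allows constants — here one uses that $\dom$ has no boundary), then pass to the limit using the integrability hypotheses on $F_s,G_s,A_s,B_s$ and on $r,\nbf$, with the stochastic-integral convergence justified by Burkh\"older--Davis--Gundy exactly as in Lemma~\ref{lemma:limit_pass_diff}, and the drift convergence by dominated convergence as in Lemma~\ref{lemma:limit_pass_drift1}. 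The bookkeeping — making sure that every quadratic-variation term lands in the correct line of $\mathcal{R}$ and that the $r\to0$ denominators are harmless because $\varrho\ge\underline r>0$ and $r$ will be taken $\ge\underline r$ in the application — is tedious but routine; the conceptual content is entirely in the It\^o expansion and the approximation.
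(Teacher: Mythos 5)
Your proposal is correct and follows essentially the same route as the paper: the same decomposition of $\mathcal{E}$, the same It\^o product-rule expansions producing the $F_s,G_s$ drifts, the $A_s,B_s$ quadratic corrections and the $\G(\varrho,\mbf)$ cross-variation, the same use of Lemmas \ref{lemma:limit_pass_diff} and \ref{lemma:limit_pass_drift1} for the limit passage, and the inequality entering only through \eqref{ineq:energy_integral}. The one operational difference is that the paper resolves the regularity obstacle you correctly identify by mollifying the weak solution $(\varrho,\mbf)$ in space---so that $\varrho^\e,\mbf^\e$ satisfy pointwise SDEs and every product rule becomes a scalar It\^o computation followed by stochastic Fubini---rather than by approximating the stochastic test functions $P^{\p}(r)$, $\nbf/r$; this is the cleaner choice because a quantity such as $\int_{\dom}\mbf\cdot(\nbf/r)\,\rd x$ is an $x$-integral of a product of two random fields, not a product of two scalar semimartingales, so the scalar product rule cannot be applied to it directly without first localizing in $x$.
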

	Though we consider the general case for $r$ with $A_s$ (possibly $\neq0$), in the proof of Theorem \ref{theorem_uniquess_I} we use Proposition \ref{prop:rel_energy} for a weak solution $(r,\nbf)$, in that situation $A_s=0$. An analogous version of Proposition \ref{prop:rel_energy} is true for stochastic Navier-Stokes equation (see \cite{bfh-ren}).

\begin{proof}[Proof of Proposition \ref{prop:rel_energy}]
		From definition of relative entropy we get
	\begin{align}
	\mathcal{E}(\varrho,\mbf|r,\nbf)=\int\limits_{\dom}\left(\frac{\abs{\mbf}^2}{2\varrho}-\frac{\mbf\cdot\nbf}{ r}+
	\frac{\varrho\abs{\nbf}^2}{2r^2}+P(\varrho)-\varrho P^{\p}(r)+P^{\p}(r)r-P(r)\right)\rd x.
	\end{align}
	We divide the proof into the following steps.
	\noi\begin{enumerate}[label=Step-\arabic*]
%
		\item Since $\varrho,\mbf$ do not have enough regularity to make a pointwise sense of the system \eqref{SCE1}--\eqref{SCE2}, we mollify the system for $(\varrho,\mbf)$ in the space variable and get the following
		\begin{align}
		\rd \varrho^\e&=-\dv_x\mbf^\e\rd s,\\
		\rd \mbf^\e&=\left[-\dv_x\left(\frac{\mbf\otimes\mbf}{\varrho}\right)^\e-\nabla_xp(\varrho)^\e\right]\rd s+\G(\varrho,\mbf)^\e\rd W_s.
		\end{align}
	
		\item Note that $r\geq \underline{r}>0$. We apply It\^{o}'s formula \cite{ito} on the process $r$ for the function $r\mapsto1/r$ to obtain
		\begin{align}
		\rd\left(\frac{1}{r}\right)&=\left[-\frac{1}{r^2} F_s+\frac{1}{r^3}\sum\limits_{k\geq1}\abs{A_s(e_k)}^2\right]\rd s-\frac{1}{r^2}A_s\rd W_s.
		\end{align}By using product rule for processes $\nbf$ and $1/r$ we get
		\begin{align}
		\rd\left(\frac{\nbf}{r}\right)&=\left[-\frac{1}{r^2} F_s+\frac{1}{r^3}\sum\limits_{k\geq1}\abs{A_s(e_k)}^2\right]\nbf\rd s+\frac{G_s}{r}\rd s\nonumber\\
		&-\frac{1}{r^2}\sum\limits_{k\geq1}A_s(e_k)B_s(e_k)\rd s+\left[-\frac{\nbf}{r^2}A_s+\frac{B_s}{r}\right]\rd W_s.
		\end{align}
		\item Again by using product rule for processes $\mbf^\e$ and $\nbf/r$ we have
		\begin{align}
		\begin{split}
		\rd\left(\frac{\mbf^\e\cdot\nbf}{r}\right)&=\left[-\frac{1}{r^2} F_s+\frac{1}{r^3}\sum\limits_{k\geq1}\abs{A_s(e_k)}^2\right]\mbf^\e\cdot\nbf\rd s+\frac{G_s\cdot\mbf^\e}{r}\rd s\\
		&-\frac{1}{r^2}\sum\limits_{k\geq1}A_s(e_k)B_s(e_k)\cdot\mbf^\e\rd s+\left[-\dv_x\left(\frac{\mbf\otimes\mbf}{\varrho}\right)^\e-\nabla_xp(\varrho)^\e\right]\cdot\frac{\nbf}{r}\rd s\\
		&+\sum\limits_{k\geq1}\left[\left[-\frac{\nbf}{r^2}A_s(e_k)+\frac{B_s(e_k)}{r}\right]\cdot\G(\varrho,\mbf)^\e(e_k)\right]\rd s\\
				&+\left[\left[-\frac{\nbf}{r^2}A_s+\frac{B_s}{r}\right]\cdot\mbf^\e+\G(\varrho,\mbf)^\e\cdot\frac{\nbf}{r}\right]\rd W_s.
		\end{split}\label{rel1}
		\end{align}
		
		\item Then, applying It\^{o}'s formula for the function $u\mapsto \frac{\abs{u}^2}{2}$ with respect to $\frac{\nbf}{r}$, we get
		\begin{align*}
		\rd\left(\frac{\abs{\nbf}^2}{2r^2}\right)&=\left[-\frac{1}{r^2} F_s+\frac{1}{r^3}\sum\limits_{k\geq1}\abs{A_s(e_k)}^2\right]\frac{\abs{\nbf}^2}{r}\rd s+\frac{G_s\cdot\nbf}{r^2}\rd s-\frac{1}{r^2}\sum\limits_{k\geq1}A_s(e_k)\frac{B_s(e_k)\cdot\nbf}{r}\rd s\\
		&+\frac{1}{2}\sum\limits_{k\geq1}\left[\frac{\abs{\nbf}^2}{r^4}\abs{A_s(e_k)}^2-2A_s(e_k)\frac{B_s(e_k)\cdot\nbf}{r^3}+\frac{\abs{B_s(e_k)}^2}{r^2}\right]\rd s\\
		&+\left[-\frac{\abs{\nbf}^2}{r^3}A_s+\frac{B_s\cdot\nbf}{r^2}\right]\rd W_s.
		\end{align*}
		Now we use the product rule for $\varrho^\e$ and $\frac{\abs{\nbf}^2}{2r}$ to obtain
		\begin{align}
		\begin{split}
		\rd\left(\varrho^\e\frac{\abs{\nbf}^2}{2r^2}\right)&=\left[-\frac{1}{r^2} F_s+\frac{1}{r^3}\sum\limits_{k\geq1}\abs{A_s(e_k)}^2\right]\frac{\varrho^\e\abs{\nbf}^2}{r}\rd s
		-\frac{\varrho^\e}{r^2}\sum\limits_{k\geq1}A_s(e_k)\frac{B_s(e_k)\cdot\nbf}{r}\rd s\\
		&+\frac{\varrho^\e G_s\cdot\nbf}{r^2}\rd s+\frac{1}{2}\sum\limits_{k\geq1}\left[\frac{\abs{\nbf}^2}{r^4}\abs{A_s(e_k)}^2-2A_s(e_k)\frac{B_s(e_k)\cdot\nbf}{r^3}
		+\frac{\abs{B_s(e_k)}^2}{r^2}\right]\varrho^\e\rd s\\
		&-\dv_x\mbf^\e\frac{\abs{\nbf}^2}{2r^2}\rd s+\left[-\varrho^\e\frac{\abs{\nbf}^2}{r^3}A_s+\varrho^\e\frac{B_s\cdot\nbf}{r^2}\right]\rd W_s.
		\end{split}\label{rel2}
			\end{align}
		\item Again we apply It\^{o}'s formula for functions $r\mapsto P^{\p}(r)$ and $r\mapsto rP^{\p}(r)-P(r)$ respectively to obtain,
		\begin{align}
		\rd P^{\p}(r)&=P^{\p\p}(r)F_s\rd s+\frac{1}{2}P^{\p\p\p}(r)\sum\limits_{k=1}^{\f}\abs{A_s(e_k)}^2\rd s+P^{\p\p}(r)A_s\rd W_s,\\
		\rd (rP^{\p}(r)-P(r))&=rP^{\p\p}(r)F_s\rd s+\frac{1}{2}(P^{\p\p}(r)+rP^{\p\p\p}(r))\sum\limits_{k=1}^{\f}\abs{A_s(e_k)}^2\rd s+rP^{\p\p}(r)A_s\rd W_s.\label{rel3}
		\end{align}
		Therefore, by using the product rule for $\varrho^\e$ and $P^{\p}(r)$ we get
		\begin{align}
		\rd (\varrho^\e P^{\p}(r))&=\varrho^\e P^{\p\p}(r)F_s\rd s+\frac{1}{2}P^{\p\p\p}(r)\varrho^\e\sum\limits_{k=1}^{\f}\abs{A_s(e_k)}^2\rd s\nonumber\\
		&-\dv_x\mbf^\e P^{\p}(r)\rd s+\varrho^\e P^{\p\p}(r)A_s\rd W_s.\label{rel4}
		\end{align}
		
		\item Clubbing \eqref{rel1}, \eqref{rel2}, \eqref{rel3} and \eqref{rel4} we obtain 
		\begin{align}
			\begin{split}
				&\rd \left(-\frac{\mbf^\e\cdot\nbf}{r}+\frac{\varrho^\e\abs{\nbf}^2}{2r^2}-P^\p(r)\varrho^\e+P^\p(r)r-P(r)\right)\\
				&=\left(\mathcal{T}_1^\e+\mathcal{T}_2^\e+\mathcal{T}_3^\e+\mathcal{T}_{4}^\e\right)\rd s+\mathcal{S}_{rem}^{\e}\rd W_s,
			\end{split}\label{rel-5}
		\end{align}
		where $\mathcal{T}_j^\e,\,j=1,2,3,4$ and $\mathcal{S}_{rem}^\e$ are defined as
		\begin{align}
		\mathcal{T}_1^\e&:=\left((r-\varrho^\e) P^{\p\p}(r)+\frac{\mbf^\e\cdot\nbf}{r^2}-\frac{\varrho^\e\abs{\nbf}^2}{r^3}\right)F_s+\frac{G_s\cdot(\varrho^{\e}\nbf-r\mbf^\e)}{r^2},
		\label{def:T1}\\
	\mathcal{T}_2^\e&:=\frac{1}{2}(P^{\p\p}(r)+P^{\p\p\p}(r)(r-\varrho^\e))\sum\limits_{k=1}^{\f}\abs{A_s(e_k)}^2\nonumber\\
	&+\sum\limits_{k=1}^{\f}\abs{A_s(e_k)}^2\left(\frac{3\varrho^\e\abs{\nbf}^2}{2r^4}-\frac{\mbf^\e\cdot\nbf}{r^3}\right)	-\sum\limits_{k\geq1}A_s(e_k)B_s(e_k)\cdot\left(\frac{2\varrho^\e\nbf}{r^3}-\frac{\mbf^\e}{r^2}\right),\label{def:T2}\\
	\mathcal{T}_3^\e&:=\sum\limits_{k\geq1}\left(\left(\frac{\nbf}{r^2}A_s(e_k)-\frac{B_s(e_k)}{r}\right)\cdot\G(\varrho,\mbf)^\e(e_k)+\frac{1}{2}\frac{\varrho^\e\abs{B_s(e_k)}^2}{r^2}\right),\label{def:T3}\\
		\mathcal{T}_{4}^\e&:=-\dv_x\mbf^\e\frac{\abs{\nbf}^2}{2r^2}+\left(\dv_x\left(\frac{\mbf\otimes\mbf}{\varrho}\right)^\e+\nabla_xp(\varrho)^\e\right)\cdot\frac{\nbf}{r}+\dv_x\mbf^\e P^{\p}(r),	\label{def:T-det}\\
	\mathcal{S}_{rem}^\e&:=\left(\frac{\nbf}{r^2}A_s-\frac{B_s}{r}\right)\cdot\mbf^\e-\G(\varrho,\mbf)^\e\cdot\frac{\nbf}{r}-\varrho^\e\frac{\abs{\nbf}^2}{r^3}A_s+\varrho^\e\frac{B_s\cdot\nbf}{r^2}+(r-\varrho^\e )P^{\p\p}(r)A_s.\label{def:M_1}
		\end{align}
		
		\item Now we integrate both side of \eqref{rel-5} over $\dom$ and by using stochastic Fubini's theorem (see \cite[chapter 4]{dpZ}) to obtain,
		\begin{align}
		&\rd\left(\int\limits_{\dom}\left(-\frac{\mbf^\e\cdot\nbf}{r}+\frac{\varrho^\e\abs{\nbf}^2}{2r^2}-P^\p(r)\varrho^\e+P^\p(r)r-P(r)\right) \rd x\right)\nonumber\\
		&=\left(\mathcal{R}_1^{\e}+\mathcal{R}_2^{\e}+\mathcal{R}_3^{\e}+\mathcal{R}_{4}^\e\right)\rd s+M_{rem}^\e\rd W_s,\label{rel-2}
		\end{align}
		where $\mathcal{R}_j^\e$ for $j=1,2,3,4$, and ${M}_{rem}^\e$ are defined as follows,
		\begin{align}
		\mathcal{R}_j^{\e}:=\int\limits_{\dom}\mathcal{T}_j^\e\,dx\mbox{ for }j=1,2,3,4,
		\mbox{ and }M_{rem}^\e:=\int\limits_{\dom}\mathcal{S}^\e_{rem}\,dx.
		\end{align}
		Using integration by parts we have
		\begin{align}
			\mathcal{R}_{4}^\e&=\int\limits_{\dom}\left(-\dv_x\mbf^\e\frac{\abs{\nbf}^2}{2r^2}+\left[\dv_x\left(\frac{\mbf\otimes\mbf}{\varrho}\right)^\e+\nabla_xp(\varrho)^\e\right]\cdot\frac{\nbf}{r}+\dv_x\mbf^\e P^{\p}(r)\right)\rd x\nonumber\\
			&=\int\limits_{\dom}\left(\left(\mbf^\e\otimes\frac{\nbf}{r}-\left(\frac{\mbf\otimes\mbf}{\varrho}\right)^\e\right):\nabla_x\frac{\nbf}{r}-p(\varrho)^\e\dv_x\frac{\nbf}{r}-\mbf^\e\cdot\nabla_xP^{\p}(r)\right)\rd x.\label{term:R-det}
		\end{align}
		
		\item By using Lemma \ref{lemma:limit_pass_drift1} we obtain the following limits  a.s. in $\Pp$ as $\e\rr0$,
			\begin{align}
		\mathcal{R}_1^\e&\rr\int\limits_{\dom}\left((r-\varrho) P^{\p\p}(r)+\frac{\mbf\cdot\nbf}{r^2}-\frac{\varrho\abs{\nbf}^2}{r^3}\right)F_s+\frac{\varrho G_s}{r}\cdot\left(\frac{\nbf}{r}-\frac{\mbf}{\varrho}\right),\label{limit:R1}\\
		\mathcal{R}^\e_2&\rr\int\limits_{\dom}\frac{1}{2}(P^{\p\p}(r)+P^{\p\p\p}(r)(r-\varrho))\sum\limits_{k=1}^{\f}\abs{A_s(e_k)}^2\rd x\nonumber\\
		&+\int\limits_{\dom}\left(\sum\limits_{k=1}^{\f}\abs{A_s(e_k)}^2\left(\frac{3\varrho\abs{\nbf}^2}{2r^4}-\frac{\mbf\cdot\nbf}{r^3}\right)	-\sum\limits_{k\geq1}A_s(e_k)B_s(e_k)\cdot\left(\frac{2\varrho\nbf}{r^3}-\frac{\mbf}{r^2}\right)\right)\rd x,\label{limit:R2}\\
		\mathcal{R}_3^\e&\rr\int\limits_{\dom}\sum\limits_{k\geq1}\left[\left[\frac{\nbf}{r^2}A_s(e_k)-\frac{B_s(e_k)}{r}\right]\cdot\G(\varrho,\mbf)(e_k)+\frac{1}{2}\frac{\varrho\abs{B_s(e_k)}^2}{r^2}\right]\rd x,
		\label{limit:R3}\\
		\mathcal{R}_{4}^\e&\rr\int\limits_{\dom}\left(\mbf\otimes\left(\frac{\nbf}{r}-\frac{\mbf}{\varrho}\right):\nabla_x\frac{\nbf}{r}-p(\varrho)\dv_x\frac{\nbf}{r}-\mbf\cdot\nabla_xP^{\p}(r)\right)\rd x.\label{limit:R-det}
		\end{align}
		Next we invoke Lemma \ref{lemma:limit_pass_diff} to get the following $\Pp$-a.s. as $\e\rr0$,
		\begin{equation}\label{limit:M}
				M_{rem}^\e\rr
				\int\limits_{\dom} \left(\frac{\varrho\nbf}{r^2}\cdot\left(\frac{\mbf}{\varrho}-\frac{\nbf}{r}\right)A_s+(r-\varrho) P^{\p\p}(r)A_s-\frac{\varrho}{r}\left(\frac{\mbf}{\varrho}-\frac{\nbf}{r}\right)\cdot B_s-\frac{\nbf}{r}\cdot\G(\varrho,\mbf)\right)\rd x.
		\end{equation}
		 By using Lemma \ref{lemma:limit_pass_drift1} we also get
		 \begin{align}
		 \begin{split}
		 				&\int\limits_{\dom}\left(-\frac{\mbf^\e\cdot\nbf}{r}+\frac{\varrho^\e\abs{\nbf}^2}{2r^2}-P^\p(r)\varrho^\e+P^\p(r)r-P(r)\right)\rd x\\
		 				&\rr\int\limits_{\dom}\left(-\frac{\mbf\cdot\nbf}{r}+\frac{\varrho\abs{\nbf}^2}{2r^2}-P^\p(r)\varrho+P^\p(r)r-P(r)\right)\rd x\mbox{ as }\e\rr0,\Pp-\mbox{a.s.}\label{limit:rel-partial}
		 \end{split}
		 \end{align}
	\end{enumerate}
Now clubbing \eqref{limit:R1}--\eqref{limit:rel-partial} and \eqref{rel-2} with \eqref{ineq:energy_integral} we conclude Proposition \ref{prop:rel_energy}.
\end{proof}

\section{Proof of pathwise uniqueness}\label{sec:weak-strong}
Now we are ready to prove the pathwise uniqueness for weak solution to the stochastic isentropic Euler system. We use relative energy inequality as derived in Proposition \ref{prop:rel_energy} and commutator estimate lemma. 
\begin{proof}[Proof of Theorem \ref{theorem_uniquess_I}:] We prove the theorem by using relative entropy inequality derived as in Proposition \ref{prop:rel_energy}. For that we need processes $r$ and $\nbf $ to be at least $C^1$ in space variable. Since $(r,\nbf)$ satisfies \eqref{SCE1}--\eqref{SCE2} in weak sense, we can not use Proposition \ref{prop:rel_energy} directly for $(r,\nbf)$, instead, we use a space-mollified version of $r,\nbf$. Now mollifying the system \eqref{SCE1}--\eqref{SCE2} in space variable for $(r,\nbf)$ we get
	\begin{align}
	\rd r^\e&=-\dv_x\nbf^\e\rd s,\\
	\rd \nbf^\e&=\left(-\dv_x\nbf^\e\frac{\nbf^\e}{r^\e}-\nbf^\e\cdot\nabla_x\frac{\nbf^\e}{r^\e}-p^{\p}(r^\e)\nabla_xr^\e+\mathcal{K}_1^\e+\mathcal{K}_2^\e\right)\rd s+\mathbb{G}(r,\nbf)^\e\rd W_s,
	\end{align}
	where $\mathcal{K}_1^\e$ and $\mathcal{K}_2^\e$ are defined as
	\begin{align}
	\mathcal{K}_1^\e&:=\dv_x\left(\frac{\nbf^\e\otimes\nbf^\e}{r^\e}\right)-\dv_x\left(\frac{\nbf\otimes\nbf}{r}\right)^\e,\\
	\mathcal{K}_2^\e&:=\nabla_xp(r^\e)-\nabla_xp(r)^\e.
	\end{align}
	We apply Proposition \ref{prop:rel_energy} with $r=r^\e,\nbf=\nbf^\e$ and
\begin{align*}
	&F_s=-\dv_x\nbf^\e,\\
	& A_s=0,\\
	 &G_s=-\dv_x\nbf^\e\frac{\nbf^\e}{r^\e}-\nbf^\e\cdot\nabla_x\frac{\nbf^\e}{r^\e}-p^{\p}(r^\e)\nabla_xr^\e+\mathcal{K}_1^\e+\mathcal{K}_2^\e,\\
	 & B_s=\mathbb{G}(r,\nbf)^\e.
\end{align*}
	Then we get	
	\begin{equation}\label{cal-rel1}
	\mathcal{E}(\varrho,\mbf|r^\e,\nbf^\e)(t_2)\leq \mathcal{E}(\varrho,\mbf|r^\e,\nbf^\e)(t_1)+\int\limits_{t_1}^{t_2} M_{rem}^\e\rd W_s+\int\limits_{t_1}^{t_2}\bar{\mathcal{R}}(\varrho,\mbf|r^\e,\nbf^\e)(s)\,\rd s,
	\end{equation}
	where $\bar{\mathcal{R}}(\varrho,\mbf|r^\e,\nbf^\e), M_{rem}^\e$ are defined as 
		\begin{align}
	\bar{\mathcal{R}}(\varrho,\mbf|r^\e,\nbf^\e)&:=-\int\limits_{\dom}\left((r^\e-\varrho) P^{\p\p}(r^\e)+\frac{\mbf\cdot\nbf^\e}{(r^\e)^2}-\frac{\varrho\abs{\nbf^\e}^2}{(r^\e)^3}+\frac{\nbf^\e\cdot(\varrho\nbf^\e-r^\e\mbf)}{(r^\e)^3}\right)\dv_x\nbf^\e\,\rd x\nonumber\\
	&-\int\limits_{\dom}\left(\varrho\frac{\nbf^\e}{r^\e}\cdot\nabla_x\frac{\nbf^\e}{r^\e}\left(\frac{\nbf^\e}{r^\e}-\frac{\mbf}{\varrho}\right)+p^{\p}(r^\e)\frac{\varrho}{r^\e}\nabla_xr^\e\cdot\left(\frac{\nbf^\e}{r^\e}-\frac{\mbf}{\varrho}\right)\right)\rd x\nonumber\\
	&+\int\limits_{\dom}\left(\frac{\varrho}{r^\e}\left(\frac{\nbf^\e}{r^\e}-\frac{\mbf}{\varrho}\right)\cdot\left(\mathcal{K}_1^\e+\mathcal{K}_2^\e\right)+
	\sum\limits_{k\geq1}\frac{\varrho}{2}\abs{\frac{\mathbb{G}(r,\nbf)^\e(e_k)}{r^\e}-\frac{\mathbb{G}(\varrho,\mbf)(e_k)}{\varrho}}^2\right)\rd x\nonumber\\
	&+\int\limits_{\dom}\left(\mbf\otimes\left(\frac{\nbf^\e}{r^\e}-\frac{\mbf}{\varrho}\right):\nabla_x\frac{\nbf^\e}{r^\e}-p(\varrho)\dv_x\frac{\nbf^\e}{r^\e}-\mbf\cdot\nabla_xP^{\p}(r^\e)\right)\rd x
	\end{align}
	and 
    \begin{align}
	M_{rem}^\e&:=-\int\limits_{\dom}\varrho\left(\frac{\G(r,\nbf)^\e}{r^\e}-\frac{\G(\varrho,\mbf)}{\varrho}\right)\cdot\left(\frac{\mbf}{\varrho}-\frac{\nbf^\e}{r^\e}\right)\,\rd x.
	\end{align}
		By using $P^{\p\p}(r^\e)=p^{\p}(r^\e)/r^\e$, we further simplify 
		\begin{align}
	\bar{\mathcal{R}}(\varrho,\mbf|r^\e,\nbf^\e)&=\int\limits_{\dom}\left(-p^{\p}(r^\e)\dv_x\nbf^\e
+\sum\limits_{k\geq1}\frac{\varrho}{2}\abs{\frac{\mathbb{G}(r,\nbf)^\e(e_k)}{r^\e}-\frac{\mathbb{G}(\varrho,\mbf)(e_k)}{\varrho}}^2\right)\,\rd x\nonumber\\
	&-\int\limits_{\dom}\left(\varrho\left(\frac{\nbf^\e}{r^\e}-\frac{\mbf}{\varrho}\right)\otimes\left(\frac{\nbf^\e}{r^\e}-\frac{\mbf}{\varrho}\right):\nabla_x\frac{\nbf^\e}{r^\e}+\left(p(\varrho)-\varrho p^\p(r^\e)\right)\dv_x\frac{\nbf^\e}{r^\e}\right)\rd x\nonumber\\
		&+\int\limits_{\dom}\frac{\varrho}{r^\e}\left(\frac{\nbf^\e}{r^\e}-\frac{\mbf}{\varrho}\right)\cdot\left(\mathcal{K}_1^\e+\mathcal{K}_2^\e\right)\rd x.\label{cal:R-1}
	\end{align}
	Applying integration by parts twice we obtain the following
	\begin{align}
	\int\limits_{\dom}\left(r^\e p^\p(r^\e)-p(r^\e)\right)\dv_x\frac{\nbf^\e}{r^\e}\,\rd x&=-\int\limits_{\dom}\frac{\nbf^\e}{r^\e}\cdot\nabla_x\left(r^\e p^\p(r^\e)-p(r^\e)\right)\,\rd x=-\int\limits_{\dom}p^{\p\p}(r^\e)\nbf^\e\cdot\nabla_xr^\e\,\rd x\nonumber\\
	&=-\int\limits_{\dom}\nbf^\e\cdot\nabla_xp^{\p}(r^\e)\,\rd x=\int\limits_{\dom}p^{\p}(r^\e)\dv_x\nbf^\e\,\rd x.\label{cal:P-1}
	\end{align}
   By using \eqref{cal:P-1} in \eqref{cal:R-1} we obtain following
			\begin{align}\label{cal-R}
	\bar{\mathcal{R}}(\varrho,\mbf|r^\e,\nbf^\e)(t)&=-\int\limits_{\dom}\varrho\left(\frac{\nbf^\e}{r^\e}-\frac{\mbf}{\varrho}\right)\otimes\left(\frac{\nbf^\e}{r^\e}-\frac{\mbf}{\varrho}\right):\nabla_x\frac{\nbf^\e}{r^\e}\,\rd x\nonumber\\
	&-\int\limits_{\dom}\left(p(\varrho)-(\varrho-r^\e) p^\p(r^\e)-p(r^\e)\right)\dv_x\frac{\nbf^\e}{r^\e}\,\rd x\nonumber\\
	&	+\int\limits_{\dom}\sum\limits_{k\geq1}\frac{\varrho}{2}\abs{\frac{\mathbb{G}(r,\nbf)^\e(e_k)}{r^\e}-\frac{\mathbb{G}(\varrho,\mbf)(e_k)}{\varrho}}^2\rd x\nonumber\\
	&+\int\limits_{\dom}\frac{\varrho}{r^\e}\left(\frac{\nbf^\e}{r^\e}-\frac{\mbf}{\varrho}\right)\cdot\left(\mathcal{K}_1^\e+\mathcal{K}_2^\e\right)\,\rd x.
	\end{align}
	We take $\varphi=\zeta(x-y)$ in one-sided condition \eqref{ineq:one_sided}, then we have
	\begin{equation}\label{mollified:condition}
	\nabla_x\left(\frac{\nbf^\e}{r^\e}\right):(\xi\otimes\xi)\geq -\chi(t)\abs{\xi}^2+\mathcal{K}_3^\e:(\xi\otimes\xi),
	\end{equation}
	where $\mathcal{K}_3^\e=\nabla_x\left(\frac{\nbf^\e}{r^\e}\right)-\nabla_x\left(\frac{\nbf}{r}\right)^\e$. Note that $p(\varrho)-(\varrho-r^\e) p^\p(r^\e)-p(r^\e)\geq c_0\abs{r^\e-\varrho}^2$ for some $c_0>0$ since $\varrho,r\geq \underline{r}>0$. Next, we take $\xi=\sqrt{\varrho}\left(\frac{\nbf^\e}{r^\e}-\frac{\mbf}{\varrho}\right)$ and $\xi=e_i$ in \eqref{mollified:condition} respectively to obtain
	\begin{align}
	&-\int\limits_{\dom}\varrho\left(\frac{\nbf^\e}{r^\e}-\frac{\mbf}{\varrho}\right)\otimes\left(\frac{\nbf^\e}{r^\e}-\frac{\mbf}{\varrho}\right):\nabla_x\frac{\nbf^\e}{r^\e}\,\rd x-\int\limits_{\dom}\left(p(\varrho)-(\varrho-r^\e) p^\p(r^\e)-p(r^\e)\right)\dv_x\frac{\nbf^\e}{r^\e}\,\rd x\nonumber\\
	&\leq \int\limits_{\dom}\chi(t)\varrho\abs{\frac{\nbf^\e}{r^\e}-\frac{\mbf}{\varrho}}^2+c_0\chi(s)\abs{r^\e-\varrho}^2\,\rd x+\int\limits_{\dom}C_1\abs{\mathcal{K}_3^\e}\,\rd x.
	\end{align}
	From \eqref{cal-R}, we get
	\begin{align}\label{cal-R-1}
	\int\limits_{t_1}^{t_2}\bar{\mathcal{R}}(\varrho,\mbf|r^\e,\nbf^\e)(s)\rd s&\leq 	\int\limits_{t_1}^{t_2}\int\limits_{\dom}\chi(s)\varrho\abs{\frac{\nbf^\e}{r^\e}-\frac{\mbf}{\varrho}}^2+c_0\chi(s)\abs{r^\e-\varrho}^2\,\rd x\rd s\nonumber\\
	&+	\int\limits_{t_1}^{t_2}\int\limits_{\dom}C_1\left(\abs{\mathcal{K}_1^\e}+\abs{\mathcal{K}_2^\e}+\abs{\mathcal{K}_3^\e}\right)\,\rd x\rd s\nonumber\\ 
	&	+	\int\limits_{t_1}^{t_2}\int\limits_{\dom}\sum\limits_{k\geq1}\frac{\varrho}{2}\abs{\frac{\mathbb{G}(r,\nbf)^\e(e_k)}{r^\e}-\frac{\mathbb{G}(\varrho,\mbf)(e_k)}{\varrho}}^2\rd x\rd s.
	\end{align}
	By using coercivity of relative entropy we obtain 
	\begin{align}\label{cal-R-2}
	\int\limits_{t_1}^{t_2}\bar{\mathcal{R}}(\varrho,\mbf|r^\e,\nbf^\e)(s)\rd s&\leq 	\int\limits_{t_1}^{t_2}C_2\chi(s)\mathcal{E}\left(\varrho,\mbf|r^\e,\nbf^\e\right)\,\rd s
	+	\int\limits_{t_1}^{t_2}\int\limits_{\dom}C_1\left(\abs{\mathcal{K}_1^\e}+\abs{\mathcal{K}_2^\e}+\abs{\mathcal{K}_3^\e}\right)\,\rd x\rd s\nonumber\\ 
		&	+	\int\limits_{t_1}^{t_2}\int\limits_{\dom}\sum\limits_{k\geq1}\frac{\varrho}{2}\abs{\frac{\mathbb{G}(r,\nbf)^\e(e_k)}{r^\e}-\frac{\mathbb{G}(\varrho,\mbf)(e_k)}{\varrho}}^2\rd x\rd s.
	\end{align}
	Then we apply Lemma \ref{lemma:commutator1} in \eqref{cal-R-2} to get
	\begin{align}\label{cal-R-3}
	\int\limits_{t_1}^{t_2}\bar{\mathcal{R}}(\varrho,\mbf|r^\e,\nbf^\e)(s)\rd s&\leq 	\int\limits_{t_1}^{t_2}C_2\chi(s)\mathcal{E}\left(\varrho,\mbf|r^\e,\nbf^\e\right)\,\rd s
	+C_1\abs{(r,\nbf)}_{B^{\al,\f}_{q}([t_1,t_2]\times\dom)}\e^{2\al-1}\nonumber\\
			&	+	\int\limits_{t_1}^{t_2}\int\limits_{\dom}\sum\limits_{k\geq1}\frac{\varrho}{2}\abs{\frac{\mathbb{G}(r,\nbf)^\e(e_k)}{r^\e}-\frac{\mathbb{G}(\varrho,\mbf)(e_k)}{\varrho}}^2\rd x\rd s.
	\end{align}
	By using the above estimate \eqref{cal-R-3}, Lemmas \ref{lemma:limit_pass_diff} and \ref{lemma:limit_pass_drift1} we pass to the limit in \eqref{cal-rel1} to obtain the following,
	\begin{align}
	\mathcal{E}(\varrho,\mbf|r,\nbf)(t_2)&\leq \mathcal{E}(\varrho,\mbf|r,\nbf)(t_1)+\int\limits_{t_1}^{t_2} M_{rem}\rd W_s+ 	\int\limits_{t_1}^{t_2}C_2\chi(s)\mathcal{E}\left(\varrho,\mbf|r,\nbf\right)\,\rd s\nonumber\\
			&	+	\int\limits_{t_1}^{t_2}\int\limits_{\dom}\sum\limits_{k\geq1}\frac{\varrho}{2}\abs{\frac{\mathbb{G}(r,\nbf)(e_k)}{r}-\frac{\mathbb{G}(\varrho,\mbf)(e_k)}{\varrho}}^2\rd x\rd s.\label{cal-weak-strong}
	\end{align}
	Since $r,\varrho\geq \underline{r}$ for $0\leq t<\tau$, by using \eqref{con-G} we estimate the last term on RHS of \eqref{cal-weak-strong} to get the following
	\begin{align}
	\mathcal{E}(\varrho,\mbf|r,\nbf)(t_2)&\leq \mathcal{E}(\varrho,\mbf|r,\nbf)(t_1)+\int\limits_{t_1}^{t_2} M_{rem}\rd W_s+ 	\int\limits_{t_1}^{t_2}C_2\chi(s)\mathcal{E}\left(\varrho,\mbf|r,\nbf\right)\,\rd s.
	\end{align}
	Now we take average integral over $\Omega$ on both side of \eqref{cal-rel1} and get
	\begin{align}
	\E\left(\mathcal{E}(\varrho,\mbf|r,\nbf)(t_2)\right)\leq \E\left(\mathcal{E}(\varrho,\mbf|r,\nbf)(t_1)\right)
	+\int\limits_{t_1}^{t_2}C_2\chi(t)\E\left(\mathcal{E}\left(\varrho,\mbf|r,\nbf\right)\right)\,\rd s.
	\end{align}
	We pass to the limit as $t_1\rr0$ and use Gr\"{o}nwall's inequality to conclude Theorem \ref{theorem_uniquess_I}. 
\end{proof}

\section{Energy balance}\label{sec:energy-balance}
In this section we establish energy balance equation for the system \eqref{SCE1}--\eqref{SCE2}. This is done by mollifying the system in space variable and then passing to the limit by commutator estimate. We use some of the formulas obtained in the proof of Proposition \ref{prop:rel_energy}.
\begin{proof}[Proof of Theorem \ref{theorem:energy_balance}:]
	
	Since $(\varrho,\mbf)$ is a weak solution to \eqref{SCE1}--\eqref{SCE2}, it does not have pointwise sense. In order to that we mollify the system \eqref{SCE1}--\eqref{SCE2} in space variable by standard mollifiers, and obtain
	\begin{align}
	\rd \varrho^\e&=-\dv_x\mbf^\e\rd s,\label{eqn:mollified_rho1}\\
	\rd\mbf^\e&=\left(-\dv_x\left(\frac{\mbf\otimes\mbf}{\varrho}\right)^\e-\nabla_xp(\varrho)^\e\right)\rd s+\G(\varrho,\mbf)^\e\rd W_s.
	\end{align}
	After a modification we get
	\begin{equation}
		\rd\mbf^\e=-\dv_x\left(\frac{\mbf^\e\otimes\mbf^\e}{\varrho^\e}\right)\rd s-\nabla_xp(\varrho^\e)\rd s+\mathcal{T}_{rem}^\e\rd s+\mathbb{G}(\varrho,\mbf)^\e\rd W_s,
	\end{equation}
	where $\mathcal{T}_{rem}^\e$ is defined as 
	\begin{equation}
	\mathcal{T}_{rem}^\e:=\dv_x\left(\frac{\mbf^\e\otimes\mbf^\e}{\varrho^\e}\right)-\dv_x\left(\frac{\mbf\otimes\mbf}{\varrho}\right)^\e+\nabla_xp(\varrho^\e)-\nabla_xp(\varrho)^\e.
	\end{equation}
	From \eqref{rel1} with $r=\varrho^\e$, $\nbf=\mbf^\e$ and
	\begin{align*}
	&F_s=-\dv_x\mbf^\e,\\
	& A_s=0,\\
	&G_s=-\dv_x\left(\frac{\mbf^\e\otimes\mbf^\e}{\varrho^\e}\right)-\nabla_xp(\varrho^\e)+\mathcal{T}_{rem}^\e,\\
	& B_s=\mathbb{G}(\varrho,\mbf)^\e,
	\end{align*}
	we get the following
%
	\begin{align}
	\rd\left(\frac{\abs{\mbf^\e}^2}{2\varrho^\e}\right)&=\frac{\dv_x\mbf^\e}{2(\varrho^\e)^2}\abs{\mbf^\e}^2\rd s+\left[-\dv_x\left(\frac{\mbf^\e\otimes\mbf^\e}{\varrho^\e}\right)-\nabla_xp(\varrho^\e)\right]\cdot\frac{\mbf^\e}{\varrho^\e}\rd s\nonumber\\
	&+\frac{1}{2}\sum\limits_{k\geq1}\frac{\abs{\G(\varrho,\mbf)^\e(e_k)}^2}{\varrho^\e}\rd s+\G(\varrho,\mbf)^\e\cdot\frac{\mbf^\e}{\varrho^\e}\rd W_s+\mathcal{T}_{rem}^\e\cdot\frac{\mbf^\e}{\varrho^\e}\rd s.
	\end{align}
		By expanding the second term on RHS, we have
	\begin{align}
	\rd\left(\frac{\abs{\mbf^\e}^2}{2\varrho^\e}\right)&=\frac{\abs{\mbf^\e}^2}{2(\varrho^\e)^2}\dv_x\mbf^\e\rd s
	-\dv_x\mbf^\e\frac{\abs{\mbf^\e}^2}{(\varrho^\e)^2}\rd s-\mbf^\e\cdot\nabla_x\left(\frac{\mbf^\e}{\varrho^\e}\right)\cdot\frac{\mbf^\e}{\varrho^\e}\rd s\nonumber\\
	&-\frac{p^{\p}(\varrho^\e)}{\varrho^\e}\mbf^\e\cdot\nabla_x\varrho^\e\rd s+\mathcal{T}_{rem}^\e\cdot\frac{\mbf^\e}{\varrho^\e}\rd s\nonumber\\
	&+\frac{1}{2}\sum\limits_{k\geq1}\frac{\abs{\G(\varrho,\mbf)^\e(e_k)}^2}{\varrho^\e}\rd s+\G(\varrho,\mbf)^\e\cdot\frac{\mbf^\e}{\varrho^\e}\rd W_s.
	\end{align}
Using \eqref{derivative:P} we simplify further to get the following 
	\begin{align}
	\rd\left(\frac{\abs{\mbf^\e}^2}{2\varrho^\e}\right)&=
	-\dv_x\mbf^\e\frac{\abs{\mbf^\e}^2}{2(\varrho^\e)^2}\rd s-\mbf^\e\cdot\nabla_x\left(\frac{\abs{\mbf^\e}^2}{2(\varrho^\e)^2}\right)\rd s-P^{\p\p}(\varrho^\e)\mbf^\e\cdot\nabla_x\varrho^\e\rd s\nonumber\\
	&+\frac{1}{2}\sum\limits_{k\geq1}\frac{\abs{\G(\varrho,\mbf)^\e(e_k)}^2}{\varrho^\e}\rd s+\G(\varrho,\mbf)^\e\cdot\frac{\mbf^\e}{\varrho^\e}\rd W_s+\mathcal{T}_{rem}^\e\cdot\frac{\mbf^\e}{\varrho^\e}\rd s.\label{cal:energy_kinetic}
	\end{align}
	By using It\^{o}'s formula for the function $r\mapsto P(r)$ with respect to \eqref{eqn:mollified_rho1}, we obtain
	\begin{equation}\label{cal:energy_internal}
	\rd P(\varrho^\e)=-P^{\p}(\varrho^\e)\dv_x\mbf^\e\rd s.
	\end{equation}
	Combining \eqref{cal:energy_kinetic} and \eqref{cal:energy_internal} we get
	\begin{align}
	\rd\left(\frac{\abs{\mbf^\e}^2}{2\varrho^\e}+P(\varrho^\e)\right)&=
	-\dv_x\left({\mbf^\e}\left(\frac{\abs{\mbf^\e}^2}{2(\varrho^\e)^2}+P^{\p}(\varrho^\e)\right)\right)\rd s+\mathcal{T}_{rem}^\e\cdot\frac{\mbf^\e}{\varrho^\e}\rd s\nonumber\\
	&+\frac{1}{2}\sum\limits_{k\geq1}\frac{\abs{\G(\varrho,\mbf)^\e(e_k)}^2}{\varrho^\e}\rd s+\G(\varrho,\mbf)^\e\cdot\frac{\mbf^\e}{\varrho^\e}\rd W_s.
	\end{align}
	Applying $\eta\in C_c^{\f}(\R_+)$ and $\varphi\in C_c^{\f}(\dom)$ we have
	\begin{align}
	-\int\limits_{0}^{T}\pa_t\eta \int\limits_{\dom}\left(\frac{\abs{\mbf^\e}^2}{2\varrho^\e}+P(\varrho^\e)\right)\varphi\,\rd x\rd s
	&=-\int\limits_{0}^{T}\eta \int\limits_{\dom}\dv_x\left({\mbf^\e}\left(\frac{\abs{\mbf^\e}^2}{2(\varrho^\e)^2}+P^{\p}(\varrho^\e)\right)\right)\varphi\,\rd x\rd s\nonumber\\
	&+\int\limits_{0}^{T}\eta \int\limits_{\dom}\frac{1}{2}\sum\limits_{k\geq1}\frac{\abs{\G(\varrho,\mbf)^\e(e_k)}^2}{\varrho^\e}\varphi\rd x\rd s\nonumber\\
	&+\int\limits_{0}^{T}\eta \int\limits_{\dom}\G(\varrho,\mbf)^\e\cdot\frac{\mbf^\e}{\varrho^\e}\varphi\rd x\rd W_s
		+\int\limits_{0}^{T}\eta \int\limits_{\dom}\mathcal{T}_{rem}^\e\cdot\frac{\mbf^\e}{\varrho^\e}\varphi\rd x\rd s.
	\end{align}
	Using integration by parts we get
	\begin{align}
	-\int\limits_{0}^{T}\pa_t\eta \int\limits_{\dom}\left(\frac{\abs{\mbf^\e}^2}{2\varrho^\e}+P(\varrho^\e)\right)\varphi\,\rd x\rd s
	&=\int\limits_{0}^{T}\eta \int\limits_{\dom}\left({\mbf^\e}\left(\frac{\abs{\mbf^\e}^2}{2(\varrho^\e)^2}+P^{\p}(\varrho^\e)\right)\right)\cdot\nabla_x\varphi\,\rd x\rd s\nonumber\\
	&+\int\limits_{0}^{T}\eta \int\limits_{\dom}\frac{1}{2}\sum\limits_{k\geq1}\frac{\abs{\G(\varrho,\mbf)^\e(e_k)}^2}{\varrho^\e}\varphi\rd x\rd s\nonumber\\
	&+\int\limits_{0}^{T}\eta \int\limits_{\dom}\G(\varrho,\mbf)^\e\cdot\frac{\mbf^\e}{\varrho^\e}\varphi\rd x\rd W_s
	+\int\limits_{0}^{T}\eta \int\limits_{\dom}\mathcal{T}_{rem}^\e\cdot\frac{\mbf^\e}{\varrho^\e}\varphi\rd x\rd s.\label{energy-cal1}
	\end{align}
	By using Lemma \ref{lemma:commutator} we get
	\begin{equation}\label{energy-cal2}
		\int\limits_{0}^{T}\eta \int\limits_{\dom}\mathcal{T}_{rem}^\e\cdot\frac{\mbf^\e}{\varrho^\e}\varphi\rd x\rd s\rr0\mbox{ as }\e\rr0\mbox{ in }\mathbb{P}-\mbox{a.s.}
	\end{equation}
	We apply Lemma \ref{lemma:limit_pass_diff} to obtain
	\begin{align}\label{energy-cal3}
	\int\limits_{0}^{T}\eta(s) \int\limits_{\dom}\G(\varrho,\mbf)^\e\cdot\frac{\mbf^\e}{\varrho^\e}\varphi\rd x\rd W_s\rr	\int\limits_{0}^{T}\eta(s) \int\limits_{\dom}\G(\varrho,\mbf)\cdot\frac{\mbf}{\varrho}\varphi\rd x\rd W_s\mbox{ as }\e\rr0\,\mbox{in }\mathbb{P}-\mbox{a.s.}
	\end{align}
	By Lemma \ref{lemma:limit_pass_drift1} we have the following
		\begin{align}
	-\int\limits_{0}^{T}\pa_t\eta(s) \int\limits_{\dom}\left(\frac{\abs{\mbf^\e}^2}{2\varrho^\e}+P(\varrho^\e)\right)\varphi\,\rd x\rd s
	&\rr	-\int\limits_{0}^{T}\pa_t\eta(s) \int\limits_{\dom}\left(\frac{\abs{\mbf}^2}{2\varrho}+P(\varrho)\right)\varphi\,\rd x\rd s,\label{energy-cal4}\\
\int\limits_{0}^{T}\eta(s) \int\limits_{\dom}\left({\mbf^\e}\left(\frac{\abs{\mbf^\e}^2}{2(\varrho^\e)^2}+P^{\p}(\varrho^\e)\right)\right)\cdot\nabla_x\varphi\,\rd x\rd s&\rr\int\limits_{0}^{T}\eta(s) \int\limits_{\dom}\left({\mbf}\left(\frac{\abs{\mbf}^2}{2\varrho^2}+P^{\p}(\varrho)\right)\right)\cdot\nabla_x\varphi\,\rd x\rd s,\label{energy-cal5}\\
	\int\limits_{0}^{T}\eta(s) \int\limits_{\dom}\frac{1}{2}\sum\limits_{k\geq1}\frac{\abs{\G(\varrho,\mbf)^\e(e_k)}^2}{\varrho^\e}\varphi\rd x\rd s&\rr \int\limits_{0}^{T}\eta(s) \int\limits_{\dom}\frac{1}{2}\sum\limits_{k\geq1}\frac{\abs{\G(\varrho,\mbf)(e_k)}^2}{\varrho}\varphi\rd x\rd s,\label{energy-cal6}
	\end{align}
	as $\e\rr0$ in $\mathbb{P}$--a.s. Clubbing \eqref{energy-cal1}--\eqref{energy-cal6} we conclude Theorem \ref{theorem:energy_balance}.
\end{proof}

\section*{Appendix}
\begin{proof}[Proof of Lemma \ref{lemma:limit_pass_diff}:]
	We first show the convergence in mean square. By triangle inequality, we have
	\begin{align*}
	\E\left(\abs{\int\limits_{0}^T\int\limits_{\dom} Y_s^\e D^\e_s\rd x\rd W_s-\int\limits_{0}^T\int\limits_{\dom} Y_sD_s\rd x\rd W_s
	}^2\right)
	&\leq 2\E\left(\abs{\int\limits_{0}^T\int\limits_{\dom} Y_s^\e\left(D^\e_s-D_s\right)\rd x\rd W_s}^2\right)\\
	&+ 2\E\left(\abs{\int\limits_{0}^T\int\limits_{\dom}\left(Y_s^\e-Y_s\right)D_s\rd x \rd W_s}^2\right).
	\end{align*}
	By using B\"urkhholder-Davis-Gundy inequality \cite{bfh-book} and H\"older inequality we get
	\begin{align}
	&	\E\left(\abs{\int\limits_{0}^T\int\limits_{\dom} Y_s^\e D^\e_s\rd x\rd W_s-\int\limits_{0}^T\int\limits_{\dom} Y_sD_s\rd x\rd W_s
	}^2\right)
	\nonumber\\
	&\leq 2C(2)\E\left(\int\limits_{0}^T\sum\limits_{k\geq1}\norm{ Y_s^\e}^2_{L^q(\dom)}\norm{D^\e_s(e_k)-D_s(e_k)}^2_{L^{\frac{q}{q-1}}(\dom)}\,\rd s\right)\nonumber\\
	&+ 2C(2)\E\left(\int\limits_{0}^T\sum\limits_{k\geq1}\norm{  Y_s^\e-Y_s}^2_{L^q(\dom)}\norm{D_s(e_k)}^2_{L^{\frac{q}{q-1}}(\dom)}\,\rd s\right).
	\end{align}
	By using H\"older inequality we observe that
	\begin{align}
	&\E\left(\int\limits_{0}^T\sum\limits_{k\geq1}\norm{ Y_s^\e}^2_{L^q(\dom)}\norm{D^\e_s(e_k)-D_s(e_k)}^2_{L^{\frac{q}{q-1}}(\dom)}\,\rd s\right)\nonumber\\
	&\leq  C(T)\left(\E\left(\sup\limits_{0\leq t\leq T}\norm{ Y_s^\e}^2_{L^q(\dom)}\right)^2\right)^{1/2}\left(\E\left(\int\limits_{0}^T\sum\limits_{k\geq1}\norm{D^\e_s(e_k)-D_s(e_k)}^2_{L^{\frac{q}{q-1}}(\dom)}\rd s\right)^2\right)^{1/2}.\label{cal-lemma1}
	\end{align}
	Recall the integrability assumption \eqref{integrability:D} on $D_s$. Now we note that for a.e. $t\in[0,T]$ and $\omega\in\Omega$ we have 
	\begin{equation}\label{bound:D}
	\sum\limits_{k\geq1}\norm{D^\e_s(e_k)-D_s(e_k)}^2_{L^{\frac{q}{q-1}}(\dom)}(\omega,t)\leq 4\sum\limits_{k\geq1}\norm{D_s}_{L^{\frac{q}{q-1}}(\dom)}^2<\f.
	\end{equation}
	We also observe that 
	\begin{equation}\label{cal-lemma2}
	\E\left(\int\limits_{0}^T\sum\limits_{k\geq1}\norm{D^\e_s(e_k)-D_s^\e(e_k)}^2_{L^{\frac{q}{q-1}}(\dom)}\rd s\right)^2\leq 4\E\left(\int\limits_{0}^T\sum\limits_{k\geq1}\norm{D_s(e_k)}^2_{L^{\frac{q}{q-1}}(\dom)}\rd s\right)^2<\f.
	\end{equation}
	Because of \eqref{bound:D}, for a fixed $t\in[0,T]$, $\omega\in\Omega$ and $\de>0$, we have
	\begin{equation*}
	\sum\limits_{k\geq1}\norm{D^\e_s(e_k)-D^\e_s(e_k)}^2_{L^{\frac{q}{q-1}}(\dom)}(\omega,t)\leq \sum\limits_{k=1}^{N_\de(\omega,t)}\norm{D^\e_s(e_k)-D^\e_s(e_k)}^2_{L^{\frac{q}{q-1}}(\dom)}(\omega,t)+\de,
	\end{equation*}
	for some $N_\de(\omega,t)\geq1$. Note that $\norm{D^\e_s(e_k)-D_s(e_k)}^2_{L^{\frac{q}{q-1}}(\dom)}(\omega,t)\rr0$ as $\e\rr0$ for a.e. $t\in[0,T]$ and $\omega\in\Omega$. Therefore, we obtain
	\begin{equation*}
	\limsup\limits_{\e\rr0+}\sum\limits_{k\geq1}\norm{D^\e_s(e_k)-D_s(e_k)}^2_{L^{\frac{q}{q-1}}(\dom)}(\omega,t)\leq\de\mbox{ for a.e. } t\in[0,T]\mbox{ and }\omega\in\Omega.
	\end{equation*}
	Since $\de>0$ is arbitrary, we get
	\begin{equation*}
	\lim\limits_{\e\rr0+}\sum\limits_{k\geq1}\norm{D^\e_s(e_k)-D^\e_s(e_k)}^2_{L^{\frac{q}{q-1}}(\dom)}(\omega,t)=0.
	\end{equation*} 
	By Lebesgue Dominated Convergence Theorem we get
	\begin{equation}\label{limit:D}
	\E\left(\int\limits_{0}^T\sum\limits_{k\geq1}\norm{D^\e_s(e_k)-D^\e_s(e_k)}^2_{L^{\frac{q}{q-1}}(\dom)}\right)^2\rr0\mbox{ as }\e\rr0.
	\end{equation}
	By using \eqref{integrability:Y}, \eqref{limit:D}, \eqref{cal-lemma2} and Lebesgue Dominated Convergence Theorem we pass to the limit in \eqref{cal-lemma1} as $\e\rr0$ to get 
	\begin{equation*}
	\E\left(\int\limits_{0}^T\sum\limits_{k\geq1}\norm{ Y_s^\e}^2_{L^q(\dom)}\norm{D^\e_s(e_k)-D_s(e_k)}^2_{L^{\frac{q}{q-1}}(\dom)}\rd s\right)\rr0\mbox{ as }\e\rr0.
	\end{equation*}
	Since we have
	\begin{equation*}
	\E\left(\sup\limits_{t\in[0,T]}\norm{Y_s}_{L^q(\dom)}\right)^2<\f\mbox{ and }\E\left(\int\limits_{0}^T\sum\limits_{k\geq1}\norm{D_s(e_k)}^2_{L^{\frac{q}{q-1}}(\dom)}\right)^2<\f,
	\end{equation*}
	we obtain
	\begin{equation*}
	\E\left(\int\limits_{0}^T\sum\limits_{k\geq1}\norm{ Y^\e_s-Y_s}^2_{L^q(\dom)}\norm{D_s (e_k)}^2_{L^{\frac{q}{q-1}}(\dom)}\right)\,dt\rr0\mbox{ as }\e\rr0.
	\end{equation*}
	Therefore, we get
	\begin{equation*}
		\E\left(\abs{\int\limits_{0}^T\int\limits_{\dom} Y_s^\e D^\e_s\rd x\rd W_s-\int\limits_{0}^T\int\limits_{\dom} Y_sD_s\rd x\rd W_s
	}^2\right)
\rr0\mbox{ as }\e\rr0.
	\end{equation*}
	Hence, up to a subsequence we conclude Lemma \ref{lemma:limit_pass_diff}.
\end{proof}

\section*{Acknowledgements} SSG thanks Martina Hofmanov\'a for enlightening discussion on this topic during her visit to TIFR-CAM. Authors would like to acknowledge the support of the Department of Atomic Energy, Government of India, under project no. 12-R\&D-TFR-5.01-0520. SSG also thanks the Inspire faculty-research grant DST/INSPIRE/04/2016/000237.

\section*{References}

\end{document}